\documentclass[11pt,oneside,reqno]{article}

\usepackage{amsmath,amsthm,amssymb,color}
\usepackage{array}
\setlength\extrarowheight{3pt}

\usepackage{bbm}
\usepackage{mathrsfs}
\usepackage[breaklinks=true]{hyperref}

\numberwithin{equation}{section}
\allowdisplaybreaks[4]

\theoremstyle{plain}
\newtheorem{theorem}{Theorem}[section]
\newtheorem{proposition}[theorem]{Proposition}
\newtheorem{lemma}[theorem]{Lemma}
\newtheorem{corollary}[theorem]{Corollary}

\theoremstyle{definition}
\newtheorem{definition}[theorem]{Definition}

\newtheorem{remark}[theorem]{Remark}

\makeatletter

\def\^#1{\ifmmode {\mathaccent"705E #1} \else {\accent94 #1} \fi}
\def\~#1{\ifmmode {\mathaccent"707E #1} \else {\accent"7E #1} \fi}

\def\*#1{#1^\ast}
\edef\-#1{\noexpand\ifmmode {\noexpand\bar{#1}} \noexpand\else \-#1\noexpand\fi}
\def\>#1{\vec{#1}}
\def\.#1{\dot{#1}}

\def\atop{\@@atop}
\def\%#1{\mathcal{#1}}

\renewcommand{\leq}{\leqslant}
\renewcommand{\geq}{\geqslant}
\renewcommand{\phi}{\Varphi}
\newcommand{\eps}{\varepsilon}
\newcommand{\D}{\Delta}
\newcommand{\eq}{\eqref}

\newcommand{\dtv}{\mathop{d_{\mathrm{TV}}}}

\newcommand{\bigo}{\mathrm{O}}

\def\tsfrac#1#2{{\textstyle\frac{#1}{#2}}}

\newcommand{\I}{\mathbb{I}}

\newcommand{\IE}{\mathbbm{E}}
\newcommand{\IP}{\mathbbm{P}}

\newcommand{\law}{\mathscr{L}}

\newcommand{\IN}{\mathbbm{N}}
\newcommand{\IZ}{\mathbbm{Z}}
\newcommand{\IR}{\mathbbm{R}}

\def\be#1{\begin{equation*}#1\end{equation*}}
\def\ben#1{\begin{equation}#1\end{equation}}

\def\ba#1{\begin{align*}#1\end{align*}}
\def\ban#1{\begin{align}#1\end{align}}

\def\bklr#1{\bigl(#1\bigr)}

\def\bbbkle#1{\biggl[#1\biggr]}

\def\abs#1{\vert#1\vert}

\newcount\minute
\newcount\hour
\newcount\hourMins
\def\now{%
\minute=\time%
\hour=\time \divide \hour by 60%
\hourMins=\hour \multiply\hourMins by 60%
\advance\minute by -\hourMins%
\zeroPadTwo{\the\hour}:\zeroPadTwo{\the\minute}%
}
\def\zeroPadTwo#1{\ifnum #1<10 0\fi#1}

\renewcommand\section{\@startsection {section}{1}{\z@}%
{-3.5ex \@plus -1ex \@minus -.2ex}%
{1.3ex \@plus.2ex}%
{\center\small\sc\mathversion{bold}\MakeUppercase}}

\def\subsection#1{\@startsection {subsection}{2}{0pt}%
{-3.5ex \@plus -1ex \@minus -.2ex}%
{1ex \@plus.2ex}%
{\bf\mathversion{bold}}{#1}}

\def\subsubsection#1{\@startsection{subsubsection}{3}{0pt}%
{\medskipamount}%
{-10pt}%
{\normalsize\itshape}{\kern-2.2ex. #1.}}

\def\blfootnote{\xdef\@thefnmark{}\@footnotetext}

\makeatother

\def\t#1{^{(#1)}}
\newcommand\ed{\stackrel{d}{=}}

\newcommand\G{\Gamma}
\newcommand\NB{\textrm{NB}}
\newcommand\K{\textrm{K}}
\newcommand\dr{D^{(r)}}

\begin{document}

\title{\sc\bf\large\MakeUppercase{Power laws in preferential attachment graphs
and Stein's method for the negative binomial distribution.}}
\author{\sc 
Nathan Ross}
\date{\it 
University of California at
Berkeley}
\maketitle

\begin{abstract} 
For a family of linear preferential attachment graphs, 
we provide rates of convergence for the total variation distance 
between the degree of a randomly chosen
vertex and an appropriate power law distribution as the number of vertices tends to infinity. 
Our proof uses a new formulation of Stein's method for the
negative binomial distribution, which stems from
a distributional transformation that has 
the negative binomial distributions
as the only fixed points.
\end{abstract}

\noindent{\textbf{Keywords:}} Stein's method, negative binomial distribution, preferential attachment, random graph, 
distributional transformations, power law.

\section{Introduction}
Preferential attachment random graphs were introduced in~\cite{baal99} 
as a stochastic mechanism to explain power law degree distributions
empirically observed in real world networks such as the world wide web.
These graphs evolve by sequentially adding vertices and edges in a
random way so that connections to
vertices with high degree are favored. 
There has been much interest in properties of these models and their
many embellishments; the draft text~\cite{vdh12} is probably the
best survey of this vast literature.
Like the seminal work \cite{baal99} (and the mathematically precise formulation~\cite{brst01}),
much of this research
is devoted to showing that if the number of vertices of the graph is large,
then the proportion of vertices having degree $k$
approximately decays as $c_\gamma k^{-\gamma}$ for some constant $c_\gamma$ and $\gamma>1$;
the so-called power law behavior.

Our main result in this vein is Theorem~\ref{thmmain} below, 
which, for a family of linear preferential attachment graphs, 
provides rates of convergence for the total variation distance 
between the degree of a randomly chosen
vertex and an appropriate power law distribution as the number of vertices tends to infinity.
The result is new and
the method of proof is also of interest since it differs substantially
from proofs of similar results (e.g.\ Section~8.5 of~\cite{vdh12}).
Our proof of Theorem~\ref{thmmain} uses a new formulation of Stein's method
for the
negative binomial distribution, Theorem~\ref{thmnbst} below
(see~\cite{ros11} and
references therein for a basic introduction to Stein's method).
The result stems from
a distributional transformation that has negative binomial distributions
as the only fixed points (we shall shortly see the
relationship between the negative binomial distribution
and power laws). Similar strategies have recently found
success in analyzing degree distributions in preferential attachment models, see~\cite{prr11}
and Section~6 of~\cite{prr10}; the latter is a special case of our
results and is the template for our proofs.
The remainder of the introduction is devoted to stating our results in greater detail. 

First we define the family of preferential attachment models we study;
these are the same models studied in Chapter~8 of~\cite{vdh12}, which
are a generalization of the models first defined in~\cite{brst01}, which in turn
are a formalization of the heuristic models described in~\cite{baal99}.
The family of models is parameterized by $m\in\IN$ and $\delta>-m$.
For $m=1$ and given $\delta$, the model 
starts with one vertex with a single loop where
one end of the loop contributes to the ``in-degree" and the other to the
``out-degree." Now, for $2\leq k\leq n$, given the graph with $k-1$ vertices, add
vertex $k$ along with an edge emanating ``out" from $k$ ``in" to a random vertex chosen from 
the set $\{1,\ldots,k\}$ with probability
proportional to the total degree of that vertex plus $\delta$, where initially vertex $k$ 
has degree one. That is, at step $k$, the chance
that vertex $k$ connects to itself is $(\delta+1)/(k(2+\delta)-1))$.  
After $n$ steps of this process, we denote the resulting random graph
by $G_n^{1,\delta}$.

For $m>1$, we define $G_n^{m,\delta}$ by first generating $G_{nm}^{1,\delta/m}$, and then
``collapsing" consecutive vertices into groups of size $m$, starting from the first vertex,
and retaining all edges. 
Note that with this setup, it is possible for a vertex to connect to itself or other
vertices more than once and as many as $m$ times (in fact the first vertex always 
consists of $m$ loops) and all of these connections contribute
to the in- and out-degree of a vertex (e.g.\ the first vertex has both in- and out-degree $m$).

Here and below, we think of $\delta$ and $m$ as fixed
and let $W_n$ be the in-degree of a randomly chosen vertex from $G_n^{m,\delta}$.
We provide a bound on the total variation distance between
$W_n$ and a limiting distribution
which is a mixture of negative binomial distributions. 
For $r>0$ and $0< p\leq 1$, we say $X\sim\NB(r,p)$
if
\ba{
\IP(X=k)=\frac{\Gamma(r+k)}{k!\Gamma(r)}(1-p)^kp^r, \hspace{5mm} k=0,1,\ldots
}

\begin{definition}\label{defk}
For $m\in \IN$, $\delta>-m$ and $U$ uniform on $(0,1)$, denote the mixture distribution
 $\NB(m+\delta,U^{1/(2+\delta/m)})$ by $\K(m, \delta)$.
\end{definition}
 
For our main result, we define the total variation distance between two non-negative integer valued random variables $X,Y$
\ban{
\dtv(\law(X),\law(Y))&=\sup_{A\subseteq \IZ_+}|\IP(X\in A)-\IP(Y\in A)| \label{dtv} \\
	&=\tsfrac{1}{2}\sum_{k\in\IZ_+} |\IP(X=k)-\IP(Y=k)|. \label{dtv1}
}
where here and below $\IZ_+=\{0,1,\ldots\}$.
\begin{theorem}\label{thmmain}
If $W_n$ is the in-degree of a randomly chosen vertex from the preferential attachment
graph $G_n^{m,\delta}$
and $\K(m,\delta)$ is the mixed negative binomial distribution of Definition~\ref{defk}, then
for some constant $C_{m,\delta}$,
\ba{
\dtv(\law(W_n),\K(m,\delta))\leq C_{m,\delta}\frac{\log(n)}{n}.
}
\end{theorem}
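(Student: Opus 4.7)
The plan follows the template of Section~6 of~\cite{prr10}: condition on the chosen vertex, apply Theorem~\ref{thmnbst} to each conditional in-degree law, then average. As a first reduction, the collapsing definition gives that the in-degree of a uniformly chosen vertex $V\in\{1,\ldots,n\}$ in $G_n^{m,\delta}$ equals the sum $\sum_{j=0}^{m-1} D_{nm}^{(m(V-1)+1+j)}$ of $m$ consecutive in-degrees in the $m=1$ graph $G_{nm}^{1,\delta/m}$. Setting $\delta_0:=\delta/m$, each $D_k^{(i)}$ is a pure birth chain with jump probability $(D_k^{(i)}+1+\delta_0)/((2+\delta_0)k-1)$; this P\'olya-urn dynamic on vertex $i$ has asymptotic weight fraction of order $(i/k)^{1/(2+\delta_0)}$, so conditional on $V=v$ the natural target is $\NB(m+\delta,p_{n,v})$ with $p_{n,v}$ essentially equal to $(v/n)^{1/(2+\delta/m)}$.

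For each fixed $v$, I would construct the distributional transformation from Theorem~\ref{thmnbst} (which characterises the $\NB(m+\delta,p)$ laws as its only fixed points) acting on $W_n$ conditional on $V=v$, by size-biasing one of the in-edges attached to vertex $v$'s cluster and rerunning the preferential-attachment dynamics from the chosen attachment time forward. Exploiting the Markov birth structure, the coupling discrepancy between the transformed variable and the original can be controlled per step in terms of the current jump probability, leading to a Stein bound of order $(n p_{n,v})^{-1}$ on $\dtv\bklr{\law(W_n\mid V=v),\NB(m+\delta,p_{n,v})}$, uniformly in $v$.

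Averaging over $V$ expresses the law of $W_n$ as an $n$-point mixture of $\NB(m+\delta,p_{n,v})$, which I compare to the continuous mixture $\K(m,\delta)$. The total error splits into the averaged Stein bound $n^{-1}\sum_{v=1}^n (n p_{n,v})^{-1}$ and a smoothing error from replacing the empirical distribution of $v/n$ by $\U(0,1)$ inside the mixture. Both pieces are of order $\log(n)/n$, the logarithm coming from the near-boundary vertices $v=\bigo(1)$ where $p_{n,v}$ is small and the pointwise approximation degrades, producing a harmonic-type sum. The main obstacle is this first step: designing the distributional transformation so that Theorem~\ref{thmnbst} delivers a bound of order $(n p_{n,v})^{-1}$ uniformly in $v$, since as $p_{n,v}\tozero$ the target negative binomial has very heavy tails and the standard Stein-method estimates of the relevant factors threaten to become singular; verifying that the coupling discrepancies arising from the P\'olya-urn structure scale exactly like the jump probabilities (and not worse) is the delicate technical point.
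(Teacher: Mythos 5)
Your overall architecture is the same as the paper's: condition on the chosen vertex, prove a per-vertex negative binomial approximation via Theorem~\ref{thmnbst}, average over the uniform index, and then compare the resulting discrete mixture with $\K(m,\delta)$ (the paper's Items~\ref{it1}--\ref{it3}, with the per-vertex step being Theorem~\ref{thm8}). However, the estimate you single out as the heart of the argument --- a bound of order $(np_{n,v})^{-1}$ on $\dtv\bklr{\law(W_n\mid V=v),\NB(m+\delta,p_{n,v})}$, \emph{uniformly in $v$} --- is not just delicate, it is false for small $v$. For a \emph{fixed} vertex $v$ as $n\toinf$, the scaled in-degree $W_{n,v}\,n^{-1/(2+\delta/m)}$ converges to a non-degenerate limit whose law is of generalized-gamma type (stretched-exponential tail; this is the content of the M\'ori martingale limit and of~\cite{prr11}), whereas $\NB(m+\delta,p_{n,v})$ rescaled by its mean converges to a genuine Gamma$(m+\delta)$ law. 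These two limits differ, so $\dtv\bklr{\law(W_{n,v}),\NB(m+\delta,p_{n,v})}$ stays bounded away from $0$ for fixed $v$, and no bound tending to $0$ in $n$ uniformly in $v$ (such as $(np_{n,v})^{-1}=n^{-1}(n/v)^{1/(2+\delta/m)}$) can hold. Concretely, in the coupling your construction would have to produce, there is an unavoidable ``bad event'' of probability of order $1/v$ --- e.g.\ edges attaching within vertex $v$'s own cluster (or the self-loop when $m=1$), on which the P\'olya-urn/size-bias structure underlying the $r$-equilibrium transformation breaks down --- so the coupling discrepancy cannot scale like the jump probabilities alone.

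The repair is exactly what the paper does: prove the weaker per-vertex bound $C_{m,\delta}/v$ (Theorem~\ref{thm8}), which is vacuous for bounded $v$ but suffices after averaging, since $n^{-1}\sum_{v=1}^n v^{-1}\leq C\log(n)/n$; the construction of $W_{n,v}^{*_r}$ requires the conditional-attachment identity (Lemma~\ref{lemcondlat}) and the observation that, on the good event, the conditioned degree has precisely the P\'olya-urn law entering Definition~\ref{req} (Lemma~\ref{lem6}). Note also that this changes your bookkeeping for the logarithm: with your claimed rate the averaged Stein term would actually be $\bigo(1/n)$ (no $\log$), while in the correct argument the $\log(n)$ comes from the harmonic sum $\sum_v 1/v$ in Item~\ref{it1} and from the comparison of the discrete parameter mixture with the continuous one in Items~\ref{it2}--\ref{it3} (handled via the TV bound between negative binomials with different $p$, Lemma~\ref{lemrr}, the mean asymptotics of Lemma~\ref{lem3}, and Lemma~\ref{lem4}). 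Those latter two pieces of your sketch are fine in outline.
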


To see the power law behavior of $\K(m,\delta)$,
we record the following easy result 
which is a more standard representation of
$\K(m,\delta)$ through its point probabilities.
The proof follows from direct computation and 
then Stirling's formula (or Lemma~\ref{lemgamrat} below). 
These formulas with additional discussion 
are also found in
Section~8.3 of~\cite{vdh12},
specifically~(8.3.2), and~(8.3.9-10). 
The representation of $\K(m,\delta)$ as a mixture of negative
binomial distributions does not seem to be well known.

\begin{lemma}
If $m\in\IN$, $\delta>-m$, and $Z\sim\K(m,\delta)$, then
for $l=0,1,\ldots$,
\ba{
\IP(Z=l)&=\left(2+\tsfrac{\delta}{m}\right)\frac{\G(l+m+\delta)\G\left(m+2+\delta+\tsfrac{\delta}{m}\right)}{\G(m+\delta)\G\left(l+m+3+\delta+\tsfrac{\delta}{m}\right)},}
and for $c_{m,\delta}=(2+\delta/m)\Gamma\left(m+2+\delta+\tsfrac{\delta}{m}\right)/\G(m+\delta)$,
\ba{
\IP(Z=k)&\asymp \frac{c_{m,\delta}}{k^{3+\delta/m}}
\,\,\,\text{as $k\to\infty$}.
}
\end{lemma}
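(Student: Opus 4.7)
The plan is to compute $\IP(Z=l)$ by conditioning on the mixing variable $U$, reducing the integral to a beta function, and then reading off the asymptotic behaviour via a standard gamma-ratio estimate.

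Write $\alpha=1/(2+\delta/m)$ and $r=m+\delta$ to simplify notation. By the definition of $\K(m,\delta)$, conditioning on $U$ gives
\bes{
\IP(Z=l)
&=\IE\!\left[\frac{\G(r+l)}{l!\,\G(r)}\bklr{1-U^{\alpha}}^{l}U^{r\alpha}\right]\\
&=\frac{\G(r+l)}{l!\,\G(r)}\int_0^1\bklr{1-u^{\alpha}}^{l}u^{r\alpha}\,du.
}
First I would carry out the substitution $v=u^{\alpha}$, $du=\alpha^{-1}v^{1/\alpha-1}\,dv$, which turns the integral into a beta integral:
\be{
\int_0^1\bklr{1-u^{\alpha}}^{l}u^{r\alpha}\,du
=\frac{1}{\alpha}\int_0^1(1-v)^{l}v^{\,r+1/\alpha-1}\,dv
=\frac{1}{\alpha}\cdot\frac{\G(r+1/\alpha)\,l!}{\G(r+1/\alpha+l+1)}.
}
Substituting back and noting $1/\alpha=2+\delta/m$ and $r+1/\alpha+l+1=l+m+3+\delta+\delta/m$ yields the claimed closed form. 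This step is purely mechanical; the only care needed is tracking that $(2+\delta/m)>0$ (ensured by $\delta>-m$) so the substitution is valid and the integral converges.

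For the asymptotics, I would apply the standard ratio
\be{
\frac{\G(l+a)}{\G(l+b)}\sim l^{\,a-b}\qquad\text{as }l\to\infty,
}
with $a=m+\delta$ and $b=m+3+\delta+\delta/m$, which is the content of the already-cited Lemma~\ref{lemgamrat} (or Stirling's formula). Since $a-b=-(3+\delta/m)$, multiplying by the $l$-independent prefactor $(2+\delta/m)\G(m+2+\delta+\delta/m)/\G(m+\delta)=c_{m,\delta}$ gives
\be{
\IP(Z=k)\asymp \frac{c_{m,\delta}}{k^{3+\delta/m}}\qquad\text{as }k\to\infty.
}

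There is no genuine obstacle here; the proof is a short integration exercise once the mixture representation is unpacked. The only mild subtlety is the exponent bookkeeping in the substitution step, where it would be easy to misplace a $+1$ in the denominator gamma factor; cross-checking with the referenced formulas (8.3.2) and (8.3.9--10) of~\cite{vdh12} confirms the final expression.
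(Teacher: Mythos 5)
Your proof is correct and follows exactly the route the paper indicates (it gives no detailed argument, saying only that the lemma "follows from direct computation and then Stirling's formula (or Lemma~\ref{lemgamrat})"): you unpack the mixture, evaluate the resulting beta integral after the substitution $v=u^{1/(2+\delta/m)}$, and apply the gamma-ratio asymptotic. The exponent bookkeeping checks out, so nothing further is needed.
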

Before discussing our Stein's method result, we make a few final remarks.
The usual mathematical statement
implying power law behavior of the degrees of a random graph 
in this setting is that
the empirical degree distribution converges to $\K(m,\delta)$ in probability 
(Theorem~8.2 of~\cite{vdh12}). Such a result 
implies the total variation distance in Theorem~\ref{thmmain} tends to zero (see Exercise~8.14 of~\cite{vdh12}),
but does not provide a rate.
Another result similar to Theorem~\ref{thmmain} is Proposition~8.4 of~\cite{vdh12} which states
that for $Z\sim\K(m,\delta)$
\ba{
\abs{\IP(W_n=k)-\IP(Z=k)}\leq C/n,
}
which according to~\eq{dtv1} neither implies nor is implied by Theorem~\ref{thmmain}.
Finally, regarding other preferential attachment models,
our results can likely be extended to some other models where
the limiting distribution is $\K(m,\delta)$, for example where the update
rule is that we consider here, but the starting graph is not. 
For other preferential attachment graphs
where the limiting degree distribution is
not $\K(m,\delta)$ (such as those of~\cite{rtv07}),
it may be possible to prove analogs of Theorem~\ref{thmmain} 
using methods similar to ours, but we do not pursue this here.

To state our general result  which we use to prove Theorem~\ref{thmmain}, 
we first define a distributional transformation.
For $r>0$ and $n\geq1$ let $U_{r,n}$ be a random variable having the distribution of
the number of white balls drawn in $n-1$ draws in 
a standard P\'olya urn scheme starting with $r$ ``white balls" and $1$ black ball.
That is, for fixed $r$, we construct $U_{r,n}$ sequentially by setting $U_{r,1}=0$, and for $k\geq1$,
\ban{ 
\IP(U_{r,k+1}=U_{r,k}+1|U_{r,k})=1-\IP(U_{r,k+1}=U_{r,k}|U_{r,k})=\frac{r+U_{r,k}}{r+k}. \label{urn}
}
Also, for a non-negative integer valued random variable $X$ with finite mean,
we say $X^s$ has the \emph{size bias} distribution of $X$ if
\ba{
\IP(X^s=k)=\frac{k\IP(X=k)}{\IE X}, \hspace{5mm} k=1,2,\ldots
}
\begin{definition}\label{req}
Let $X$ be a non-negative integer valued random variable with finite mean and let $X^s$ denote
a random variable having the size bias distribution of $X$. We say 
the random variable $X^{*_r}$ has the \emph{$r$-equilibrium} transformation if
\ba{
X^{*_r}\ed U_{r,X^s},
}
where we understand $U_{r,X^s}$ to mean $\law(U_{r,X^s}|X^s=k)=\law(U_{r,k})$.  
\end{definition}
As we shall see below in Corollary~\ref{cornb},
$X^{*_r}\ed X$ if and only if $X\sim\NB(r,p)$ for some $0<p< 1$.  
Thus if some non-negative integer valued random variable $W$ 
has approximately the same distribution as $W^{*_r}$, it
is plausible that $W$ is approximately distributed
as a negative binomial distribution. The next result
makes this heuristic precise. Here and below we denote the indicator
of an event $B$ by $\I_B$ or $\I[B]$.

\begin{theorem}\label{thmnbst}
Let $W$ be a non-negative integer valued random variable
with $\IE W=\mu$. Let also $r>0$ and $W^{*_r}$ be coupled to $W$ and
have the $r$-equilibrium transformation of Definition~\ref{req}.
If $p=r/(r+\mu)$ and $c_{r,p}=\min\{(r+2)(1-p),2-p\}\leq 2$,
then for an event $B$
\ba{
\dtv(\law(W),\NB(r,p))&\leq c_{r,p} \IE\left[\I_B\abs{W^{*_r}-W}\right]+2(e\max\{1,r\}+1)\IP(B^c) \\
	&\leq 2(e\max\{1,r\}+1)\IP(W^{*_r}\not=W).
}
\end{theorem}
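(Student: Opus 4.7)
The plan follows the standard Stein's method program tailored to the negative binomial. From the pmf recursion $(k+1)\IP(Z=k+1) = (1-p)(r+k)\IP(Z=k)$ for $Z\sim\NB(r,p)$, the natural Stein operator is $\mathcal{A}f(k) = (1-p)(r+k)f(k+1) - kf(k)$, and for each $A\subseteq\IZ_+$ the solution to $\mathcal{A}f_A(k) = \I_A(k) - \NB(r,p)(A)$ is obtained via forward recursion from $f_A(0)=0$, yielding $f_A(k+1) = F_A(k)/[(1-p)(r+k)\pi_k]$ where $F_A(k) = \IP(Z\leq k,\,Z\in A) - \IP(Z\in A)\IP(Z\leq k)$ and $\pi_k = \IP(Z=k)$. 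Then $\dtv(\law(W),\NB(r,p)) = \sup_A|\IE[\mathcal{A}f_A(W)]|$.

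Next I would prove uniform Stein factor bounds $\|\Delta f_A\|_\infty \leq c_{r,p}$ and $\|f_A\|_\infty \leq e\max\{1,r\}+1$. The two candidates in $c_{r,p} = \min\{(r+2)(1-p),\,2-p\}$ arise from two different estimates of $|f_A(k+1)-f_A(k)|$: the $2-p$ bound from a Poisson-style telescoping argument (sharp as $p\to 1$), and the $(r+2)(1-p)$ bound from Gamma-ratio estimates (along the lines of Lemma~\ref{lemgamrat}, sharper as $p\to 0$). The $\|f_A\|_\infty$ bound follows from the estimate $|F_A(k)| \leq \min\{\IP(Z\leq k), \IP(Z>k)\}$ together with sharp NB tail bounds, the constant $e\max\{1,r\}+1$ emerging naturally from worst-case evaluation.

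The heart of the proof is a distributional identity connecting $\IE[\mathcal{A}f(W)]$ with the $r$-equilibrium $W^{*_r}$. Using $\IE[Wf(W)]=\mu\IE[f(W^s)]$, the analogous $(r+W)$-size-biased identity (which introduces an auxiliary variable $\wt W$ with $\IP(\wt W = k) \propto (r+k)\IP(W=k)$, equivalently a $\Be(p)$-mixture of $W$ and $W^s$), and $(1-p)(r+\mu)=\mu$, one arrives at
\be{
\IE[\mathcal{A}f(W)] = \mu\IE\bklel f(\wt W + 1) - f(W^s)\bkler.
}
Exploiting the P\'olya urn construction $W^{*_r} = U_{r,W^s}$ along with the exact match between its transition probability $(r+U_{r,k})/(r+k)$ and the coefficient $(1-p)(r+k)$ in $\mathcal{A}$, the right-hand side can be rewritten as a coupled difference $\IE[\Phi(W,W^{*_r})(f(W+1)-f(W^{*_r}+1))]$ with $\Phi$ suitably bounded. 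On $B$ the Lipschitz bound $|f(W+1)-f(W^{*_r}+1)| \leq \|\Delta f_A\|_\infty|W-W^{*_r}| \leq c_{r,p}|W-W^{*_r}|$ yields the first term, while on $B^c$ the uniform bound $|f(W+1)-f(W^{*_r}+1)| \leq 2\|f_A\|_\infty \leq 2(e\max\{1,r\}+1)$ yields the second; taking $B = \{W^{*_r}=W\}$ recovers the second inequality in the statement. The main obstacle is pinning down the precise form of this coupling identity, which requires carefully unfolding the P\'olya urn's Markov and beta-mixture structure and its precise match with the negative binomial Stein operator; once this is in hand, the Stein factor bounds and the splitting argument are routine.
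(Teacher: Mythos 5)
Your skeleton is the standard one and parts of it match the paper: the Brown--Phillips operator $\mathcal{A}g(k)=(1-p)(r+k)g(k+1)-kg(k)$, the observation that $(1-p)(r+\mu)=\mu$ gives $\IE[\mathcal{A}g(W)]=\mu\,\IE[g(\wt W+1)-g(W^s)]$, the split over $B$ and $B^c$, and the choice $B=\{W=W^{*_r}\}$ for the second inequality. But the step you defer---rewriting this as a coupled difference $\IE[\Phi(W,W^{*_r})(g(W+1)-g(W^{*_r}+1))]$ with ``$\Phi$ suitably bounded''---is precisely the novel content of the theorem, and you give neither a precise form of $\Phi$ nor an argument; as stated it is not a proof outline but a placeholder for the missing idea. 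The paper closes this gap differently and concretely: it introduces $\dr g(k)=(1+k/r)g(k+1)-(k/r)g(k)$, so the characterizing operator becomes $r(1-p)\dr g(W)-pWg(W)$, and proves by induction along the P\'olya urn (Lemma~\ref{lemnbrt}) that $\IE \dr g(U_{r,n})=g(n)$, hence $\mu\IE\dr g(W^{*_r})=\IE Wg(W)$. The quantity to estimate is then $p\mu\,\IE[\dr g(W)-\dr g(W^{*_r})]$, i.e.\ the entire comparison is carried out on the \emph{transformed} function $\dr g$, not on $g$ itself; no auxiliary $(r+\cdot)$-biased variable $\wt W$ is needed. Without this identity (or a proved substitute) your argument does not close.

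Relatedly, the Stein-factor bounds you plan to prove are for the wrong functions and are false as stated. From the Stein equation~\eq{nbstsol} at $k=0$ one has $g_A(1)=(\I[0\in A]-\IP(Y\in A))/(r(1-p))$; taking $A=\{0\}$ with $r$ small this is approximately $\log(1/p)/(1-p)$, which exceeds both $c_{r,p}\leq 2$ and $e\max\{1,r\}+1$ for small $p$, so neither $\supnorm{\D g_A}\leq c_{r,p}$ nor $\supnorm{g_A}\leq e\max\{1,r\}+1$ can hold uniformly. The bounds that are true and that the proof actually uses (Lemmas~\ref{lemnbstbd} and~\ref{lemnbstbd1}) are $\abs{(k+1)g(k+1)}\leq e\max\{1,r\}/p$ and $\abs{\D g(k)}\leq\min\{1/((1-p)(r+k)),1/k\}$, which yield $\sup_k\abs{\dr g(k)}\leq(e\max\{1,r\}+1)/(r(1-p))$ and $\sup_k\abs{\D(\dr g(k))}\leq\min\{1+2/r,(2-p)/(r(1-p))\}$; only after multiplying by $p\mu=r(1-p)$ do the constants $c_{r,p}=\min\{(r+2)(1-p),2-p\}$ and $2(e\max\{1,r\}+1)$ emerge. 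So the gap is twofold: the urn/size-bias identity linking $\mathcal{A}$ to the $r$-equilibrium transformation is unproved, and the smoothness bounds you propose to establish are not the relevant ones.
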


\begin{remark}\label{remod}
Analogs of Theorem~\ref{thmnbst} for other distributions which use fixed points of 
distributional transformations 
are now well established in the Stein's method literature.
For example, the book~\cite{bhj92} develops Stein's method for Poisson approximation
using the fact that a non-negative integer valued random variable $X$ with
finite mean has the Poisson distribution if and only if
$X\ed X^s-1$. Also there is the zero bias transformation
for the normal distribution~\cite{gore97}, the equilibrium transformation for the exponential distribution~\cite{rope10},
a less standard distribution~\cite{prr11},
and the special case where $r=1$ above, the discrete equilibrium transformation for the geometric distribution~\cite{prr10}
(see also~\cite{pek96} for an unrelated transformation used for geometric approximation).
\end{remark}
\begin{remark}
The fact that negative binomial distributions are the fixed points of
the $r$-equilibrium transformation is the discrete analog  
of the fact, perhaps more familiar,
that a non-negative random variable $X$ has the gamma distribution with shape parameter
$\alpha$ if and only if 
\ba{
X\ed B_{\alpha,1}X^s,
}
where $B_{\alpha,1}$ is a beta variable with density $\alpha x^{\alpha-1}$ for $0<x<1$ independent of $X^s$; see~\cite{piro12}.
\end{remark}

The layout of the remainder of the article is as follows. 
In Section~\ref{secnb} we develop Stein's method for the
negative binomial distribution using the $r$-equilibrium transformation
and prove Theorem~\ref{thmnbst}. In Section~\ref{secpa}
we use Theorem~\ref{thmnbst} to prove Theorem~\ref{thmmain}.

\section{Negative binomial approximation}\label{secnb}
The proof of Theorem~\ref{thmnbst} roughly follows
the usual development of Stein's method of distributional approximation
using fixed points of distributional transformations (see the references of Remark~\ref{remod}).
Specifically, if $W$ is a non-negative integer valued random variable of interest and $Y$ has the negative binomial distribution, 
then using the definition~\eq{dtv} we want to bound
$|\IP(W\in A)-\IP(Y\in A)|$
uniformly for $A\subseteq \IZ_+$.
Typically, this program has three components.
\begin{enumerate}
\item[1.] Define a \emph{characterizing operator} $\mathcal{A}$ for the negative binomial distribution which
has the property that
\ba{
\IE \mathcal{A}g(Y)=0
}
for all $g$ in a large enough class of functions if and only if $\law(Y)\sim \NB(r,p)$.

\item[2.] For $A\subseteq\IZ_+$, define $g_A$ to solve
\ban{\label{cheq}
\mathcal{A}g_A(k)=\I[k\in A]-\IP(Y\in A).
}
\item[3.] Using \eq{cheq}, note that
\ba{
\abs{\IP(W\in A)-\IP(Y\in A)} = \abs{\IE \mathcal{A}g_A(W)}.
}
Now use properties of the solutions $g_A$ and the distributional
transformation to bound the right side of this equation.
\end{enumerate}

Obviously there must be some relationship between 
the characterizing operator of Item~1 and the distributional transformation of Item~3; this is typically the subtle 
part of the program above. For Item~1, we use the characterizing operator for the negative binomial distribution as defined in~\cite{brph99}.
\begin{theorem}\label{thmbp}\cite{brph99}
If $W\geq0$ has a finite mean, then
$W\sim\NB(r,p)$ if and only if
\ban{
\IE[(1-p)(r+W) g(W+1)-Wg(W)]=0 \label{nbchar}
} 
for all bounded functions $g$.
\end{theorem}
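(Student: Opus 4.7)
The plan is a two-sided argument pivoting on the elementary ratio identity
\[ (k+1)\IP(Y=k+1) = (1-p)(r+k)\IP(Y=k), \qquad k \geq 0, \]
which follows by direct computation from the explicit mass function $\IP(Y=k)=\G(r+k)(1-p)^k p^r/(k!\G(r))$ of $\NB(r,p)$. For the \emph{necessity} direction, I would assume $W\sim\NB(r,p)$, expand $\IE[(1-p)(r+W)g(W+1)] = \sum_{k\geq 0}(1-p)(r+k)g(k+1)\IP(W=k)$, use the ratio identity to replace the summand by $(k+1)g(k+1)\IP(W=k+1)$, and reindex $j=k+1$ (the $j=0$ term vanishes because of the factor of $j$) to arrive at $\IE[Wg(W)]$. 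Subtracting from the other piece of \eq{nbchar} yields zero; the finite mean of $W$ together with boundedness of $g$ justifies absolute convergence and the reordering.

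For the \emph{sufficiency} direction, I would test \eq{nbchar} against the bounded indicator $g=\I_{\{k\}}$ for each fixed $k\geq 1$; the identity collapses to the recurrence
\[ k\IP(W=k) = (1-p)(r+k-1)\IP(W=k-1), \qquad k\geq 1. \]
Iterating gives $\IP(W=k)=\IP(W=0)(1-p)^k\G(r+k)/(k!\G(r))$, and combining the normalization $\sum_k\IP(W=k)=1$ with the standard series $\sum_{k\geq 0}(1-p)^k\G(r+k)/(k!\G(r))=p^{-r}$ forces $\IP(W=0)=p^r$, completing the identification $W\sim\NB(r,p)$.

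The main obstacle is really just setup: indicator test functions only probe the values $\IP(W=k)$ at non-negative integers, so the theorem is implicitly restricted to integer-valued $W$ (which is the only case used in the sequel and the setting of \cite{brph99}). One should also note that at the boundary $k=0$ the choice $g=\I_{\{0\}}$ forces the trivially true $0=0$, so $\IP(W=0)$ is not determined by the recurrence itself but rather fixed by the normalization once all other probabilities are expressed in terms of it.
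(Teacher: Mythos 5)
Your proposal is correct, and there is nothing in the paper to compare it with line by line: Theorem~\ref{thmbp} is quoted from \cite{brph99} and the paper gives no proof of it, so your direct verification fills a gap rather than duplicating an argument. The two halves are sound: the ratio identity $(k+1)\IP(Y=k+1)=(1-p)(r+k)\IP(Y=k)$ plus reindexing (with absolute convergence from boundedness of $g$ and $\IE W<\infty$) gives necessity, and testing \eq{nbchar} with $g=\I_{\{k\}}$, iterating the resulting recurrence and normalizing gives sufficiency, including your correct observation that $k=0$ yields no information and $\IP(W=0)$ is pinned down only by the normalization (the degenerate case $p=1$ also comes out consistently, since then the recurrence forces $W=0$ a.s.). One refinement: the integer-valuedness caveat you flag can be removed inside your own framework rather than imposed as a hypothesis. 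If $g$ is allowed to be any bounded measurable function, take $g=\I_B$ with $B$ a Borel subset of $(0,1)$; since $W+1\geq 1$ the first term of \eq{nbchar} vanishes and you get $\IE\bkle{W\I[W\in B]}=0$, hence $\IP(0<W<1)=0$, and inducting over the intervals $(k,k+1)$ shows that any nonnegative $W$ with finite mean satisfying \eq{nbchar} is automatically supported on $\IZ_+$. With that extra step the theorem holds as literally stated for general $W\geq 0$, after which your point-indicator argument applies verbatim; in the paper's applications $W$ is integer-valued anyway, so either reading suffices there.
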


We need to develop the connection between the characterizing operator of Theorem~\ref{thmbp} and
the $r$-equilibrium transformation. To this end, for a function $g$
define 
\ba{
\dr g(k)=\left(k/r+1\right)g(k+1)-(k/r)g(k),
}
and note that the negative binomial characterizing operator of~\eq{nbchar} can be written
\ban{
r(1-p)\dr g(W)-pWg(W). \label{chop1}
}
The key relationship is the following. 
\begin{lemma}\label{lemnbrt}
If the integer valued random variable $X\geq0$
has finite mean $\mu>0$, $X^{*_r}$ has the $r$-equilibrium distribution of $X$,
and $g$ is a function such that the expectations
below are well defined, then 
\ba{
\mu \IE \dr g(X^{*_r}) = \IE X g(X).
}
\end{lemma}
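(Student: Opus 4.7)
The plan is to reduce the identity to one combinatorial fact about the P\'olya urn variable $U_{r,n}$, and then establish that fact by a short direct computation. Setting $h(n) := \IE \dr g(U_{r,n})$, Definition~\ref{req} gives that conditional on $X^s = n$ the variable $X^{*_r}$ is distributed as $U_{r,n}$, whence $\IE \dr g(X^{*_r}) = \IE h(X^s)$. Combining with the size-bias identity $\IE f(X^s) = \IE[X f(X)]/\mu$ yields
\be{\mu\,\IE \dr g(X^{*_r}) \;=\; \IE[X h(X)].}
Since the prefactor $X$ annihilates the value of $h$ at $0$, the lemma reduces to the single identity $h(n) = g(n)$ for every integer $n \geq 1$.

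To prove that identity, I would use the explicit beta--binomial law of $U_{r,n}$,
\be{\IP(U_{r,n}=j) = \binom{n-1}{j}\frac{r\,\G(r+j)\,\G(n-j)}{\G(r+n)}, \qquad 0\leq j \leq n-1,}
which follows either from iterating the recursion~\eq{urn} or from the de Finetti representation $U_{r,n}\mid P \sim \Bi(n-1,P)$ with $P$ having density $r p^{r-1}$ on $(0,1)$. Writing $r\,\dr g(j) = (r+j)g(j+1) - j g(j)$, expanding $r\,\G(r+n)\,h(n)$ against these weights, and substituting $k = j+1$ in the $g(j+1)$-piece, I would collect coefficients of each $g(k)$ for $k = 1,\dots,n$. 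The $k = n$ boundary term contributes exactly $\G(r+n)\,g(n)$, while for $1 \leq k \leq n-1$ the two contributions both equal $\G(r+k)\,(n-1)!/(k-1)!$ via the elementary identity
\be{\binom{n-1}{k-1}(n-k)! \;=\; k\binom{n-1}{k}(n-k-1)! \;=\; \frac{(n-1)!}{(k-1)!},}
so they cancel and $h(n) = g(n)$, as required.

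The only substantive point, and hence the main obstacle, is checking that this cancellation occurs at every intermediate $k$. The normalization $k/r$ built into $\dr$ is what makes the Gamma factors in the urn weights telescope against the coefficients coming from $(r+j)g(j+1) - jg(j)$; a different discrete difference would leave residual $g(k)$ terms. In other words, this lemma expresses precisely the sense in which $\dr$ is the correct companion of the $r$-equilibrium transformation, and thereby of the negative binomial Stein characterization~\eq{nbchar}.
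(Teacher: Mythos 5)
Your proof is correct, and the reduction is the same as the paper's: both arguments boil the lemma down, via the size-bias identity and the conditional definition of $X^{*_r}$, to the single fact $\IE \dr g(U_{r,n})=g(n)$ for $n\geq 1$. Where you diverge is in how that fact is proved. The paper argues by induction on $n$, using only the one-step urn recursion \eq{urn} to derive $\IE f(U_{r,n+1})=\tfrac{r}{r+n}f(n)+\tfrac{n}{r+n}\IE f(U_{r,n})$ and then taking $f=\dr g$; it never needs the closed-form law of $U_{r,n}$. You instead invoke the explicit P\'olya (beta--binomial) distribution $\IP(U_{r,n}=j)=\binom{n-1}{j}r\,\G(r+j)\G(n-j)/\G(r+n)$ and verify the identity by a summation-by-parts cancellation; I checked the coefficient bookkeeping and it works, with the $k=n$ boundary term surviving and the intermediate coefficients matching via $\binom{n-1}{k-1}(n-k)!=k\binom{n-1}{k}(n-k-1)!=(n-1)!/(k-1)!$ (there is a harmless slip of an overall factor $r$ between ``expanding $r\,\G(r+n)h(n)$'' and the stated value $\G(r+n)g(n)$ of the boundary term, which cancels from both sides and does not affect the conclusion). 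The trade-off: the paper's induction is self-contained and requires no distributional formula, while your computation makes explicit \emph{why} $\dr$ is the right operator — the Gamma weights of the urn law telescope exactly against $(r+j)g(j+1)-jg(j)$ — and, through the de Finetti representation $U_{r,n}\mid P\sim\Bi(n-1,P)$ with $P$ of density $rp^{r-1}$, it also foreshadows the mixture structure that underlies the negative binomial fixed-point property and the limit law $\K(m,\delta)$.
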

\begin{proof}
We show that 
\ban{
\IE \dr g(U_{r,n})=g(n), \label{gdr1}
}
which, using the definition of the size bias distribution implies that
\ba{
\mu\IE \dr g(X^{*_r})=\mu \IE g(X^s)=\IE Xg(X),
}
as desired. To show~\eq{gdr1}, we use induction on $n$. The equality is obvious for $n=1$ 
since $U_{r,1}=0$.  Assume that \eq{gdr1} holds for $n$ and we  show it holds for $n+1$.
By conditioning on the previous step in the urn process defining $U_{r,n+1}$ and using~\eq{urn},
we find for a function $f$ such that the expectations below
are well defined,
\ba{
\IE f(U_{r,n+1})= \frac{1}{r+n}\IE (U_{r,n}+r)f(U_{r,n}+1)+\IE \left(1-\frac{U_{r,n}+r}{r+n}\right)f(U_{r,n}).
}
Combining this equality with the induction hypothesis in the form
\ba{
\IE (U_{r,n}+r)f(U_{r,n}+1)=rf(n)+\IE U_{r,n}f(U_{r,n}),
}
yields
\ba{
\IE f(U_{r,n+1})=\frac{r}{r+n} f(n)+\frac{n}{r+n} \IE f(U_{r,n}).
}
Now taking $f=\dr g$ and using the induction hypothesis again yields~\eq{gdr1}.
\end{proof}

We now record
the following result which, while not necessary for
the proof of Theorem~\ref{thmnbst}, underlies our whole approach for
negative binomial approximation.

\begin{corollary}\label{cornb}
If the integer valued random variable $X\geq0$ is such that $\IE X=r(1-p)/p$ for some
$0<p<1$, then
$X\sim\NB(r,p)$ if and only if
\ba{
X\ed X^{*_r}.
}
\end{corollary}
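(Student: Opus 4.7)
The plan is to combine Theorem~\ref{thmbp} with Lemma~\ref{lemnbrt} to reformulate the negative binomial characterization entirely in terms of the $r$-equilibrium transformation, and then show that the resulting condition forces $X\ed X^{*_r}$. First I would rewrite the Brown--Phillips characterization in $\dr$-form: since $r\dr g(k)=(k+r)g(k+1)-kg(k)$, a direct rearrangement shows
\begin{equation*}
(1-p)(r+W)g(W+1)-Wg(W)=r(1-p)\dr g(W)-pWg(W),
\end{equation*}
so Theorem~\ref{thmbp} asserts that $X\sim\NB(r,p)$ iff $r(1-p)\IE\dr g(X)=p\,\IE Xg(X)$ for all bounded $g$, i.e.\ $\IE\dr g(X)=\mu^{-1}\IE Xg(X)$ with $\mu=r(1-p)/p$. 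But Lemma~\ref{lemnbrt} gives $\mu^{-1}\IE Xg(X)=\IE\dr g(X^{*_r})$. Combining, $X\sim\NB(r,p)$ is equivalent to
\begin{equation*}
\IE\dr g(X)=\IE\dr g(X^{*_r}) \qquad \text{for all bounded } g.
\end{equation*}

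The $(\Leftarrow)$ direction of the corollary is now immediate: if $X\ed X^{*_r}$, the displayed identity holds trivially, so $X\sim\NB(r,p)$.

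For $(\Rightarrow)$, assume $X\sim\NB(r,p)$, so the identity holds for every bounded $g$; I need to upgrade this to $X\ed X^{*_r}$. The key step is to show that $\dr$ maps bounded functions onto bounded functions on $\IZ_+$. Given any bounded $h$, the equation $\dr g=h$ is the first-order linear recursion
\begin{equation*}
g(k+1)=\frac{k}{r+k}\,g(k)+\frac{r}{r+k}\,h(k), \qquad g(0)=0,
\end{equation*}
whose coefficients are nonnegative and sum to one. A trivial induction therefore gives $\supnorm{g}\leq\supnorm{h}$, so $g$ is bounded whenever $h$ is. (Note that since $X\sim\NB(r,p)$ has all moments, so do $X^s$ and $X^{*_r}\leq X^s$; hence $\IE\dr g(X^{*_r})$ is well defined for every bounded $g$, justifying the use of Lemma~\ref{lemnbrt} on this class.) Consequently $\IE h(X)=\IE h(X^{*_r})$ for every bounded $h$, and specializing to indicators $h=\I_{\{k\}}$ yields $X\ed X^{*_r}$.

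The main obstacle is the surjectivity step: one has to check that the class $\{\dr g:g\text{ bounded}\}$ is rich enough to separate laws on $\IZ_+$. The convex-combination structure of the recursion for $g$ is precisely what preserves boundedness and makes this elementary. Everything else is a mechanical combination of Theorem~\ref{thmbp} and Lemma~\ref{lemnbrt}.
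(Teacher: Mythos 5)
Your proof is correct; the $(\Leftarrow)$ direction (fixed point implies $\NB(r,p)$) is exactly the paper's argument, combining Theorem~\ref{thmbp} with Lemma~\ref{lemnbrt} through the rewriting of the characterizing operator, but your $(\Rightarrow)$ direction takes a genuinely different route. The paper shows that $Y\sim\NB(r,p)$ satisfies $Y^{*_r}\ed Y$ by the method of moments: it quotes explicit factorial-moment formulas for the P\'olya urn variable $U_{r,n}$ and for the negative binomial from Johnson and Kotz, matches the factorial moments of $Y^{*_r}$ and $Y$, and invokes moment determinacy via the moment generating function. You instead use the easy half of Theorem~\ref{thmbp} together with Lemma~\ref{lemnbrt} to obtain $\IE \dr g(X)=\IE \dr g(X^{*_r})$ for all bounded $g$, and then invert the operator $\dr$ on bounded functions: the recursion $g(k+1)=\tfrac{k}{r+k}g(k)+\tfrac{r}{r+k}h(k)$ is a convex combination, so $\supnorm{g}\leq\supnorm{h}$, and taking $h=\I_{\{k\}}$ identifies the two laws pointwise. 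This inversion step is sound (the choice of $g(0)$ is immaterial since it enters with coefficient $k/r=0$ at $k=0$), and your observation that $X^{*_r}\leq X^s$ has finite mean justifies applying Lemma~\ref{lemnbrt} to the linearly growing functions $\dr g$. What your route buys is self-containedness: no external factorial-moment formulas and no appeal to moment determinacy, only the Stein characterization plus an elementary contraction property of $\dr$ --- in effect you solve a simplified Stein-type equation $\dr g=h$, in the spirit of Lemmas~\ref{lemnbstbd} and~\ref{lemnbstbd1}. What the paper's route buys is independence from the characterizing theorem in that step and explicit factorial-moment identities for $U_{r,n}$ and $X^{*_r}$, which are informative about the transformation itself.
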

\begin{proof}
If $X\ed X^{*_r}$ then
combining Theorem~\ref{thmbp} and Lemma~\ref{lemnbrt}, noting
the representation~\eq{chop1}, we easily see that $X\sim\NB(r,p)$.

Conversely, assume $Y\sim\NB(r,p)$, and we show $Y^{*_r} \ed Y$
using the method of moments.
According to (4.3) on Page 178 of~\cite{joko77},
\ba{
\IE \left[U_{r,n}(U_{r,n}-1)\cdots(U_{r,n}-k+1)\right]=\frac{r (n-1)\cdots(n-k)}{r+k},
}
which implies that for $X$ with finite $k+1$ moments and $\IE X=r(1-p)/p$,
\ba{
\IE \left[X^{*_r}\cdots(X^{*_r}-k+1)\right]&=\frac{r \IE \left[(X^s-1)\cdots(X^s-k)\right]}{(r+k)}, \\	
	&=\frac{p}{1-p}\frac{\IE \left[X(X-1)\cdots(X-k)\right]}{(r+k)}.
}
Now from display (2.29) on Page 84 of~\cite{joko77},
if $Y\sim\NB(r,p)$,
then
\ba{
\IE[Y\cdots(Y-k+1)]=r\cdots(r+k-1)\left(\frac{1-p}{p}\right)^k.
}
Combining this with the calculation above,
we find that for all $k\geq1$,
\ba{
\IE [Y^{*_r}\cdots (Y^{*_r}-k+1)]=\IE [Y\cdots(Y-k+1)].
}
Since $Y$ has a well behaved moment generating function (i.e.\ exists in a neighborhood around zero), the moment sequence determines
the distribution and so $Y\ed Y^{*_r}$, as desired.
\end{proof}


The next two lemmas take care of Item~2 in the program outlined above, and obtain
the properties of the solution for Item~3. We prove
Theorem~\ref{thmnbst} immediately after the lemmas.
For a function $g:\IZ_+\to\IR$, define $\D g(k)=g(k+1)-g(k)$.

\begin{lemma}\label{lemnbstbd}
If $Y\sim\NB(r,p)$ and for $A\subseteq \IZ_+$, $g:=g_A$ satisfies the Stein equation
\ban{
(1-p)(r+k) g(k+1)-kg(k)=\I[k\in A]-\IP(Y\in A), \label{nbstsol}
} 
then for $k=0,1\ldots$,
\ba{
&\abs{(k+1)g(k+1)}\leq\frac{\max\{1,r\}e}{p} \text{\, and \, } \abs{\Delta g(k)}\leq \min\left\{\tsfrac{1}{(1-p)(r+k)}, \tsfrac{1}{k}\right\}.
}
\end{lemma}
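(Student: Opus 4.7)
The plan is to solve the Stein equation~\eqref{nbstsol} explicitly and read off both bounds from that formula. Writing $p_k:=\IP(Y=k)$, the negative binomial pmf gives the identity $(1-p)(r+k)p_k=(k+1)p_{k+1}$. Multiplying~\eqref{nbstsol} by $p_k$ and summing from $k=0$ to $n$ (taking the natural convention $g(0)=0$, since the Stein equation leaves $g(0)$ free) telescopes the left-hand side to
\[
(n+1)p_{n+1}g(n+1) = \sum_{k=0}^{n}p_k\bigl(\I[k\in A]-\IP(Y\in A)\bigr)=:F_A(n).
\]
Since $F_A(\infty)=0$, one also has $F_A(n)=-\sum_{k>n}p_k(\I[k\in A]-\IP(Y\in A))$, and a short calculation from either representation gives $|F_A(n)|\leq\IP(Y\leq n)\IP(Y>n)\leq \min\{\IP(Y\leq n),\IP(Y>n)\}$. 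The first bound of the lemma thus reduces to the uniform-in-$n$ estimate $\min\{\IP(Y\leq n),\IP(Y>n)\}/p_{n+1}\leq \max\{1,r\}\,e/p$.

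When $r\leq 1$ the consecutive ratios $p_{k+1}/p_k=(1-p)(1+(r-1)/(k+1))$ are all at most $1-p$, so $\IP(Y>n)\leq p_{n+1}/p$ and the bound holds with constant $1$. When $r>1$, $p_{k+1}/p_k$ is decreasing in $k$, so past the mode I control the tail sum $\IP(Y>n)/p_{n+1}$ by a geometric series in $\alpha_{n+1}=(1-p)(r+n+1)/(n+2)$, while before the mode I control $\IP(Y\leq n)/p_{n+1}$ using the same monotonicity. Iterating the ratio with $1+x\leq e^x$ and $\sum_{\ell=1}^j(n+1+\ell)^{-1}\leq\ln(1+j/(n+1))$ yields $p_{n+1+j}/p_{n+1}\leq (1-p)^j(1+j/(n+1))^{r-1}$; splitting the sum in $j$ at $j=n+1$ (using $(1+j/(n+1))^{r-1}\leq 2^{r-1}$ on the first half and a shifted geometric comparison on the second) then produces the bound, with the factor $e$ being the residue of the $1+x\leq e^x$ step. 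This uniform control across the mode transition is the main technical obstacle.

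For $|\Delta g(k)|$ I exploit linearity: $g_A=\sum_{m\in A}g_{\{m\}}$, and from $F_A$ the singleton solutions are
\[
g_{\{m\}}(n+1) = \begin{cases} p_m\IP(Y>n)/[(n+1)p_{n+1}], & n\geq m,\\ -p_m\IP(Y\leq n)/[(n+1)p_{n+1}], & n<m. \end{cases}
\]
A direct comparison (using the identity to rewrite the differences) shows $\Delta g_{\{m\}}(k)>0$ precisely when $m=k$, and $\leq 0$ for every other $m$. Combined with $\sum_{m\geq 0}\Delta g_{\{m\}}(k)=0$ (the Stein solution for $A=\IZ_+$ is identically zero), this forces $\sup_A|\Delta g_A(k)|=\Delta g_{\{k\}}(k)$. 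Explicitly,
\[
\Delta g_{\{k\}}(k) = \frac{\IP(Y>k)}{(1-p)(r+k)} + \frac{\IP(Y\leq k-1)}{k},
\]
and both claimed bounds reduce, after summing $(1-p)(r+j)p_j=(j+1)p_{j+1}$ over $0\leq j\leq k-1$ (respectively $j\geq k+1$), to the elementary moment inequality $\IE[Y\I[Y\leq k-1]]\leq k\IP(Y\leq k-1)$ and its dual $\IP(Y>k)\leq \IP(Y\geq k)$.

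In sum the proof has three stages: solve~\eqref{nbstsol} via the telescoping identity, bound $(k+1)g(k+1)$ through the tail ratio, and bound $\Delta g$ by reducing to the extremal singleton $A=\{k\}$. Only the uniform tail estimate for $r>1$ in stage two is delicate; everything else is algebraic bookkeeping anchored on $(1-p)(r+j)p_j=(j+1)p_{j+1}$.
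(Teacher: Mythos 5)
Your overall architecture for the first bound is the same as the paper's: the telescoped explicit solution gives exactly the representation $(n+1)p_{n+1}g(n+1)=\IP(Y\in A,Y\leq n)-\IP(Y\in A)\IP(Y\leq n)$ used there, and the problem reduces to bounding the tail-to-pmf ratio with a case analysis around $r(1-p)/p$. Your treatment of $\abs{\Delta g(k)}$ is a genuine (and correct, once the sketched sign claims are written out) self-contained alternative to the paper, which simply cites Brown and Xia; your reduction to the singleton $A=\{k\}$ and the two inequalities $(1-p)(r+k)\IP(Y\leq k-1)\leq k\IP(Y\leq k)$ and $k\IP(Y>k)\leq(1-p)(r+k)\IP(Y\geq k)$ does check out via the identity $(1-p)(r+j)p_j=(j+1)p_{j+1}$, each collapsing to a trivial inequality of the type you name.

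The genuine gap is in your handling of the mode-transition zone for $r>1$, i.e.\ the range $(r-1)(1-p)/p+1\leq n+1\leq r(1-p)/p-1$ (non-empty exactly when $p\leq 1/3$), which is the only place your ``before the mode'' and ``past the mode'' arguments do not already give $r/p$. There your estimate is $\IP(Y>n)/p_{n+1}\leq\sum_{j\geq0}(1-p)^j(1+j/(n+1))^{r-1}$, split at $j=n+1$ with $(1+j/(n+1))^{r-1}\leq 2^{r-1}$ on the first half. But that first half alone is then bounded only by $2^{r-1}\sum_{j\geq0}(1-p)^j=2^{r-1}/p$, and $2^{r-1}>e\max\{1,r\}$ already for $r\geq5$; e.g.\ with $r=10$, $p=0.1$, $n+1\in\{82,\ldots,89\}$ your bound is $2^{9}/p=5120$ against the target $10e/p\approx 272$. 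So the step as described cannot deliver the stated constant $\max\{1,r\}e/p$ (the true ratio in this zone is of order $\sqrt{r(1-p)}/p$, so the lemma is safe, but your crude summation loses an exponential factor in $r$). Closing this requires either a genuinely sharper evaluation of your series (a Laplace-type estimate of $\sum_j(1-p)^j(1+j/(n+1))^{r-1}$ exploiting that $r-1\asymp (n+1)p/(1-p)$ in this zone), or the paper's device: bound $\IP(Y\leq k)$ by the beta-integral representation $\frac{\Gamma(r+k+1)}{\Gamma(r)\Gamma(k+1)}\int_0^pu^{r-1}(1-u)^k\,du$, estimate the integrand at its maximizer $p_*=(r-1)/(r+k-1)$, and control the resulting factor $((1-p_*)/(1-p))^k$ by $e^{3-2/(1-p)}\leq e$, which is where the factor $e$ actually comes from. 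Your attribution of the $e$ to the $1+x\leq e^x$ step is not a substitute for this estimate.
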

\begin{proof}
The second assertion bounding $\abs{\Delta g(k)}$ is Theorem~2.10 applied to Example~2.9 of \cite{brxi01}.
For the first assertion, note that
\ba{
&(k+1)g(k+1)=\frac{\left[\IP(Y\in A, Y\leq k)-\IP(Y\in A)\IP(Y\leq k)\right]}{\IP(Y=k+1)} \\
&=\frac{\left[\IP(Y\in A, Y\leq k)\IP(Y\geq k+1)-\IP(Y\in A, Y\geq k+1)\IP(Y\leq k)\right]}{\IP(Y=k+1)},
}
so we find
\ban{
\abs{(k+1)g(k+1)}\leq \frac{\IP(Y\geq k+1)\IP(Y\leq k)}{\IP(Y=k+1)}, \label{gbd1}
}
and the bound also holds with either term alone in the numerator.

If $r=1$ (the geometric distribution), then we can compute~\eq{gbd1} exactly 
as $(1-(1-p)^{k+1})/p\leq 1/p$, as desired.
If $0<r<1$, then Proposition~1(b) of~\cite{kla00}
implies that 
$\IP(Y\geq k+1)/\IP(Y=k+1)\leq 1/p$, which implies the result in this case.

If $r>1$, then we bound~\eq{gbd1} in three cases:
$k+1\geq r(1-p)/p$, $k+1\leq (r-1)(1-p)/p$,
and $(r-1)(1-p)/p+1\leq k+1\leq r(1-p)/p-1$.
For the first case, Proposition~1(b) of~\cite{kla00} implies
that for $k+1\geq r(1-p)/p$,
\ban{
\frac{\IP(Y\geq k+1)}{\IP(Y=k+1)}\leq\left(1-(1-p)\frac{k+1+r}{k+2}\right)^{-1}. \label{gtth}
}
The right hand side is decreasing in $k$, so setting 
$k+1=r(1-p)/p$ and simplifying,
we find that for $k+1\geq r(1-p)/p$,~\eq{gtth} is bounded by
$r/p-r+1\leq r/p$, as desired.  For the other two cases, we use the
representation (see e.g.~(2.27) of~\cite{adjo06})
\ba{
\IP(Y\leq k)=\frac{\Gamma(r+k+1)}{\Gamma(r)\Gamma(k+1)}\int_0^p u^{r-1} (1-u)^k du,
}
which yields that~\eq{gbd1} is bounded by
\ban{
\frac{(k+1)\int_0^p u^{r-1} (1-u)^k du}{p^r(1-p)^{k+1}}. \label{smv1}
}
The maximum of the integrand is achieved at $p_*=(r-1)/(r+k-1)$ and if
$k+1\leq (r-1)(1-p)/p$, then $p_*\geq p$ which implies that
\ba{
\int_0^p u^{r-1} (1-u)^k du\leq p^r (1-p)^k,
}
and thus that~\eq{smv1} is bounded by $(k+1)/(1-p)\leq (r-1)/p\leq r/p$ due
to the restriction on the value of $k$. 

Finally, assume $(r-1)(1-p)/p+1\leq k+1\leq r(1-p)/p-1$ and note
that in order for such $k$ to exist, $0<p\leq 1/3$ and we assume this for the 
remainder of the proof. With $p_*$ as above,
\ba{
\int_0^p u^{r-1} (1-u)^k du\leq p p_*^{r-1} (1-p_*)^k,
}
and the lower bound on the range of $k$ implies that 
$p_*\leq p$ and so we find~\eq{smv1}
is bounded above by
\ban{
\frac{(k+1)(1-p_*)^k}{(1-p)^{k+1}}\leq \frac{r}{p}\left(\frac{1-p_*}{1-p}\right)^k. \label{ugh}
}
Recalling that $1-p \leq 1- p_*=k/(r+k-1)$, it is easy to see that~\eq{ugh} is increasing in $k$.
Substituting the maximum value of $k$ for this case, $r(1-p)/p-2$, into~$p_*$ and then this into~\eq{ugh} and simplifying,
we find that~\eq{smv1} is bounded above by
\ba{
\frac{r}{p}\left(\frac{r/p-2/(1-p)}{r/p-3}\right)^{r/p-r-2} \leq \frac{r}{p} e^{3-2/(1-p)}\leq \frac{  r }{p} e,
}
where the first inequality follows since $r/p-r-2\leq r/p-3$ and that for $a,x>0$
$\left(1+\tsfrac{a}{x}\right)^x\leq e^{a}$.

\end{proof}

We need the following easy corollary of Lemma~\ref{lemnbstbd}.
\begin{lemma}\label{lemnbstbd1}
If for $A\subseteq\IZ_+$, $g:=g_A$ satisfies the Stein equation~\eq{nbstsol}, then
\ba{
&\sup_{k\in\IZ_+}\abs{\dr g(k)}\leq \frac{\max\{r,1\} e+1}{r(1-p)}, \\
&\sup_{k\in\IZ_+}\abs{\Delta (\dr g(k))}\leq \min\left\{1+\frac{2}{r},\frac{2-p}{r(1-p)}\right\}.
}
\end{lemma}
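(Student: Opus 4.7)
The plan is to reduce both bounds to the estimates already recorded in Lemma~\ref{lemnbstbd} via two short algebraic manipulations of the definition $\dr g(k)=(k/r+1)g(k+1)-(k/r)g(k)$.

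For the supremum bound on $\dr g$, I would multiply the definition through by $r$ to obtain $r\dr g(k)=(r+k)g(k+1)-kg(k)$. The Stein equation~\eq{nbstsol} substitutes $(1-p)(r+k)g(k+1)=kg(k)+\I[k\in A]-\IP(Y\in A)$, and rearranging gives
\ba{
r(1-p)\,\dr g(k) = p\,kg(k) + \I[k\in A]-\IP(Y\in A).
}
Plugging in the bound $|kg(k)|\leq\max\{1,r\}\,e/p$ from Lemma~\ref{lemnbstbd} and the trivial estimate $|\I[k\in A]-\IP(Y\in A)|\leq 1$ immediately produces the first claimed inequality.

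For the bound on $\Delta(\dr g)$, I would expand $\dr g(k+1)-\dr g(k)$ and regroup the four terms to obtain the identity
\ba{
\Delta(\dr g(k)) = \frac{r+k+1}{r}\,\Delta g(k+1)\;-\;\frac{k}{r}\,\Delta g(k).
}
The two alternative estimates then correspond to two ways of applying the $\Delta g$ bound from Lemma~\ref{lemnbstbd}. Using $|\Delta g(k+1)|\leq 1/((1-p)(r+k+1))$ on the first summand and $|\Delta g(k)|\leq 1/k$ on the second produces $1/(r(1-p))+1/r=(2-p)/(r(1-p))$; using $|\Delta g(j)|\leq 1/j$ throughout gives $(r+k+1)/(r(k+1))+1/r\leq 1+2/r$. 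The $k=0$ boundary requires a brief separate check since $1/k$ is undefined there, but the prefactor $k/r$ annihilates the second summand and both bounds are easily verified.

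No step presents a real obstacle; the only mild subtlety is recognising the identity for $\Delta(\dr g)$ in a form compatible with the minimum structure of the $\Delta g$ bound in Lemma~\ref{lemnbstbd}, so that the two summands may be estimated by different choices from the minimum and thereby yield each of the two asserted bounds.
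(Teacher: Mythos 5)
Your proposal is correct and follows essentially the same route as the paper: rearranging the Stein equation to write $r(1-p)\dr g(k)=pkg(k)+\I[k\in A]-\IP(Y\in A)$ for the first bound, and using the identity $\Delta(\dr g(k))=\frac{r+k+1}{r}\Delta g(k+1)-\frac{k}{r}\Delta g(k)$ together with the two alternatives in the $\Delta g$ estimate of Lemma~\ref{lemnbstbd} for the second. Your explicit bookkeeping of which branch of the minimum is used for each summand (and the $k=0$ check) simply fills in what the paper summarizes as ``judiciously using Lemma~\ref{lemnbstbd}.''
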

\begin{proof}
For the first assertion, since $g$ solves the Stein equation~\eq{nbstsol}, 
\ba{
\abs{r(1-p)\dr g(k)} &\leq \abs{p k g(k)+\I[k\in A]-\IP(Y\in A)} \\
	&\leq \abs{p k g(k)}+\abs{\I[k\in A]-\IP(Y\in A)} \\
	&\leq \max\{r,1\} e+1,
}
where we have used Lemma~\ref{lemnbstbd}.

For the second assertion, it is easy to see that
\ba{
\Delta (\dr g(k))=\frac{r+k+1}{r}\D g(k+1)-\frac{k}{r}\D g(k),
}
and the lemma follows after taking the absolute value, applying the triangle inequality, and
judiciously using Lemma~\ref{lemnbstbd}.
\end{proof}

\begin{proof}[Proof of Theorem~\ref{thmnbst}]
Following the usual Stein's method machinery,
for $Y\sim\NB(r,p)$ and $g:=g_A$ solving~\eq{nbstsol} for $A\subseteq\IZ_+$, we have
\ban{
\dtv(\law(W)&,\NB(r,p))=\sup_{A\subseteq \IZ_+}|\IE[\I[W\in A]-\IP(Y\in A)]| \notag\\ 
&=\sup_{A\subseteq \IZ_+}|\IE [(1-p)(r+W) g_A(W+1)-Wg_A(W)]|, \notag\\
&=p\sup_{A\subseteq \IZ_+}|\IE[\mu\dr g_A(W)- Wg_A(W)]|. \notag
} 
Lemma~\ref{lemnbrt} implies that for $g:=g_A$,
\ba{
p\IE[\mu\dr g(W)&- Wg(W)]=p\mu\IE[\dr g(W)- \dr g(W^{*_r})]  \\
	&=p\mu\IE[(\dr g(W)- \dr g(W^{*_r}))\I_B] \\
	&\qquad+p\mu\IE[(\dr g(W)- \dr g(W^{*_r}))\I_{B^c}] \\
	&=:R_1+R_2.
}
Using that $\mu=r(1-p)/p$, we have
\ba{
p\mu\abs{\dr g(W)- \dr g(W^{*_r})}\leq 2r(1-p)\sup_{k\in\IZ_+}\abs{\dr g(k)}, 
}
and so Lemma~\ref{lemnbstbd1} implies that $\abs{R_2}\leq 2(e\max\{1,r\}+1)\IP(B^c)$.

To bound $\abs{R_1}$, we write
\ba{
|\dr g(W)- \dr g(W^{*_r})|&=\bigg|\I[W>W^{*_r}]\sum_{k=0}^{W-W^{*_r}-1}\D \dr g(W^{*_r}+k)\\
	&\quad-\I[W^{*_r}>W]\sum_{k=0}^{W^{*_r}-W-1}\D \dr g(W+k)\bigg|,\\
	&\leq \sup_{k\in\IZ_+}\abs{\Delta (\dr g(k))}\abs{W^{*_r}-W}.
}
Combining this with the bound of Lemma~\ref{lemnbstbd1}, we find
\ba{
\abs{R_1}\leq\min\{(r+2)(1-p),2-p\}\IE\abs{W^{*_r}-W}\I_{B},
}
which, upon adding to the bound on $\abs{R_2}$, yields the first bound in theorem.
The second bound is obtained from the first by choosing $B=\{W=W^{*_r}\}$.
\end{proof}

\section{Preferential attachment proof}\label{secpa}

In this section we prove Theorem~\ref{thmmain} following the strategy 
of proof of the main result of Section~6 of~\cite{prr10}, which is a special case of 
our results (it will likely help the reader to first understand the proof there).
We use $C_{m,\delta}$
to denote a constant only depending on $m$ and $\delta$ which may change from line to line.

Theorem~\ref{thmmain} easily follows by the triangle inequality applied to the following three claims.
If $I$ is uniform on
$\{1, \ldots, n\}$
independent of $W_{n,i}$ defined to be the in-degree of vertex $i$ in $G_n^{m,\delta}$,
and $\mu_{n,i}:=\IE W_{n,i}$, then
\begin{enumerate}
\item\label{it1} $\dtv\bklr{\law(W_{n,I}), \NB(m+\delta, \tsfrac{m+\delta}{\mu_{n,I}+m+\delta})}\leq
C_{m,\delta}\frac{\log(n)}{n}$,
\item\label{it2} $\dtv\bklr{ \NB(m+\delta, \tsfrac{m+\delta}{\mu_{n,I}+m+\delta}), \NB(m+\delta, (I/n)^{1/(2+\delta/m)})}\leq
C_{m,\delta}\frac{\log(n)}{n}$,
\item\label{it3} $\dtv\bklr{  \NB(m+\delta, (I/n)^{1/(2+\delta/m)}), \K(m,\delta) }\leq C_{m,\delta}\frac{\log(n)}{n}$.
\end{enumerate}

The proofs of Items~\ref{it2} and~\ref{it3} are relatively straightforward,
while the proof of Item~\ref{it1} uses the following result which we show using Stein's method
(i.e.\ Theorem~\ref{thmnbst}).

\begin{theorem} \label{thm8}
Retaining the notation and definitions above, we have
\ba{
       \dtv\bklr{\law(W_{n,i}), \NB(m+\delta, \tsfrac{m+\delta}{\mu_{n,i}+m+\delta})}
                \leq \frac{C_{m,\delta}}{i}.
}
\end{theorem}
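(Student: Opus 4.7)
The plan is to invoke Theorem~\ref{thmnbst} with $r=m+\delta$, $\mu=\mu_{n,i}$, and $p=(m+\delta)/(\mu_{n,i}+m+\delta)$. Since $r$ depends only on $m$ and $\delta$, the prefactor $2(e\max\{1,r\}+1)$ is bounded by $C_{m,\delta}$, so the task reduces to constructing a coupling of $W_{n,i}$ with an $r$-equilibrium version $W_{n,i}^{*_r}$ satisfying $\IP(W_{n,i}^{*_r}\neq W_{n,i})\leq C_{m,\delta}/i$.

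The strategy should parallel the geometric case ($r=1$) of Section~6 of~\cite{prr10}, exploiting the P\'olya urn description of preferential attachment. Once vertex $i$ is born, the conditional probability that $i$ receives a new in-edge at step $k>i$ is proportional to its current weight $W_{k-1,i}+m+\delta$ against a total weight that grows deterministically by $2m+\delta$ per step; after a time re-parameterization absorbing the deterministic growth, this is precisely the reinforcement rule of the urn $U_{r,\cdot}$ of~\eqref{urn} used to define the $r$-equilibrium transform. In particular, the distribution of the in-degree of $i$ at any later step can be identified, conditionally on the state of the urn at birth, with the law of the urn count at an explicit later ``urn time''.

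Using this identification, I would realize $W_{n,i}^{*_r}$ through a size-biased coupled copy $\tilde G$ of the preferential attachment graph, obtained from $G$ by a minimal modification of the attachment history near vertex~$i$. The in-degree of $i$ in $\tilde G$ is distributed as the size-bias $W_{n,i}^s$, and the P\'olya urn identity translates the requirement $W_{n,i}^{*_r}\ed U_{r,W_{n,i}^s}$ into reading off the in-degree of $i$ in $\tilde G$ at a suitable random time associated with the biasing mechanism. With the right definition, $G$ and $\tilde G$ differ only by a single-edge perturbation in the attachment history, so the coupled pair $(W_{n,i},W_{n,i}^{*_r})$ agrees on the event that this perturbation does not propagate through the remaining preferential attachment dynamics.

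The main obstacle will be obtaining the sharp $1/i$ decay for the probability of disagreement. Straightforward moment bounds on the number of post-perturbation changes to vertex~$i$'s in-degree only yield a logarithmic rate, so finer urn-exchangeability arguments in the spirit of~\cite{prr10}, combined with the moment asymptotics $\mu_{n,i}\asymp(n/i)^{1/(2+\delta/m)}$ and associated variance estimates from Chapter~8 of~\cite{vdh12}, will be required to close the gap. An additional subtlety is the random but bounded initial urn configuration at time~$i$ arising from within-block self-loops and multi-edges when $m>1$, as well as a separate treatment of the atom $\{W_{n,i}=0\}$ in the size-biasing construction; both contribute only lower-order errors absorbable into $C_{m,\delta}$.
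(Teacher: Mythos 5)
Your proposal gets the outer reduction right: apply Theorem~\ref{thmnbst} with $r=m+\delta$ and $p=(m+\delta)/(\mu_{n,i}+m+\delta)$, so that everything hinges on coupling $W_{n,i}$ to an $(m+\delta)$-equilibrium version with disagreement probability $O(1/i)$. But the central construction is missing, and the substitute you sketch is not justified. The paper does \emph{not} realize $W_{n,i}^{*_r}$ by ``reading off the in-degree of $i$ in the size-biased graph at a suitable random time''; it is unclear that the degree at any random time has the law $U_{r,W_{n,i}^s}$, and you give no identity that would make this work. What actually makes the argument go through is a two-part device: (i) Lemma~\ref{lemcondlat}, which shows that conditioning on a specific edge $X_{k,l}\t{\delta/m}=1$ (the edge selected by the size-bias mechanism of Proposition~\ref{prop2}) leaves the attachment dynamics unchanged except that this planted edge is counted in the weights; and (ii) an auxiliary ``phantom'' vertex $i'$ that receives the planted edge, so that the size-biased degree splits as $W_{n,i}(K,L)+W_{nm,i'}\t{1,\delta/m}(K,L)$. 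The key observation (Lemma~\ref{lem6}, Item~3) is that, conditional on this total being $t$ and on the good event $B_{K,L}$ that no edge from vertex $i$'s block attaches within the block or to $i'$, the portion $W_{n,i}(K,L)$ going to the real block is distributed exactly as the P\'olya urn count $U_{m+\delta,t}$ of \eq{urn} --- each later edge aimed at the combined set $\{$block of $i$, $i'\}$ chooses between them proportionally to current weights. This is precisely Definition~\ref{req}, and it is how $W_{n,i}^{*_r}$ in \eq{ret} is built; your sketch contains neither the planted-edge conditioning lemma nor the phantom-vertex split, so the equilibrium coupling is not actually constructed. You also omit the need to control $\IP(B_{K,L}^c)$, which the paper shows is $O(1/i)$ by a gamma-ratio computation.

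A second, smaller point: your worry that ``straightforward moment bounds only yield a logarithmic rate'' is unfounded. With the natural monotone coupling through shared uniforms (perfect agreement until vertex $l$ arrives, then denominators $s(2+\delta/m)$ versus $s(2+\delta/m)-1$), the per-step disagreement probability is of order $(\IE W_{s,j}\t{1,\delta/m}+1+\delta/m)\,s^{-2}\asymp (s/j)^{1/(2+\delta/m)}s^{-2}$, and since $1/(2+\delta/m)<1$ the sum over $s\geq l\asymp mi$ converges and is $O(1/i)$ with no logarithm; no finer exchangeability argument is needed beyond the construction itself. So the rate is not the obstacle --- the missing equilibrium construction is.
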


The layout of the remainder of this section is as follows. We 
first collect and prove some lemmas necessary for
the proof of Items~\eq{it1}-\eq{it3} above and then prove these results.
We prove Theorem~\ref{thm8} last, since it is relatively involved. 

Since $G_n^{m,\delta}$ is constructed from $G_{nm}^{1,\delta/m}$ it will be 
helpful to denote $W_{k,j}\t{1,\eps}$ to be the in-degree of vertex $j$ 
in $G_k^{1,\eps}$ for $k\geq j-1$, where we set $W_{j-1,1}\t{1,\eps}:=0$.
The first lemma is useful for computing moment information; 
it is a small variation of a special case of the remarkable results of \cite{mor05}, see also Proposition~8.9 in~\cite{vdh12}.
\begin{theorem}\label{thmmart}\cite{mor05}
If $j\geq1$ and $\eps>-1$, then the sequence of random variables 
\ba{
\frac{\Gamma\left(k+\tsfrac{1+\eps}{2+\eps}\right)}{\Gamma\left(k+1\right)}(W_{k,j}\t{1,\eps}+1+\eps)
}
is a martingale for $k\geq j-1$, where we take $W_{j-1,j}\t{1,\eps}:=0$. In particular, for $k\geq j-1$,
\ba{
\IE W_{k,j}\t{1,\eps}+1+\eps
=(1+\eps)\frac{\Gamma\left(k+1\right)\Gamma\left(j-1+\tsfrac{1+\eps}{2+\eps}\right)}{\Gamma\left(k+\tsfrac{1+\eps}{2+\eps}\right)\Gamma\left(j\right)}.
}
\end{theorem}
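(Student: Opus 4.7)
The plan is to verify the martingale property by a direct one-step computation under the attachment dynamics, then read off the mean formula by taking expectations and using the convention $W_{j-1,j}\t{1,\eps}:=0$.

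\textbf{Step 1 (attachment probability).} I first compute the conditional probability that the edge of vertex $k+1$ lands on vertex $j$ (for $j\leq k$). After step $k$, the graph has $k$ vertices and $k$ edges, so the sum of degrees is $2k$; adding vertex $k+1$ with its pre-attached out-edge contributes an extra degree of $1$. Hence the total weight used in the preferential rule is
\be{
\sum_{i=1}^{k}(\deg_k(i)+\eps) + (1+\eps) \;=\; 2k+1+(k+1)\eps \;=\; (k+1)(2+\eps)-1,
}
and vertex $j$ carries weight $W_{k,j}\t{1,\eps}+1+\eps$ (in-degree, plus one for its own out-edge, plus $\eps$). So
\be{
\IP\bklr{W_{k+1,j}\t{1,\eps}=W_{k,j}\t{1,\eps}+1\bmid \mathcal{F}_k}=\frac{W_{k,j}\t{1,\eps}+1+\eps}{(k+1)(2+\eps)-1}.
}

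\textbf{Step 2 (multiplicative recursion).} Since $W_{k+1,j}\t{1,\eps}=W_{k,j}\t{1,\eps}+\I[\text{edge hits }j]$, this gives
\be{
\IE\bklr{W_{k+1,j}\t{1,\eps}+1+\eps\bmid \mathcal{F}_k}
=\bklr{W_{k,j}\t{1,\eps}+1+\eps}\cdot\frac{(k+1)(2+\eps)}{(k+1)(2+\eps)-1}.
}
I check the boundary case $k=j-1$ separately: with $W_{j-1,j}\t{1,\eps}:=0$, vertex $j$ appears at step $j$ with initial weight $1+\eps$, and the probability of a self-loop is $(1+\eps)/(j(2+\eps)-1)$, yielding $\IE[W_{j,j}\t{1,\eps}+1+\eps]=(1+\eps)\cdot j(2+\eps)/(j(2+\eps)-1)$, which matches the same recursion.

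\textbf{Step 3 (choosing the normalization).} I set $a_k=\G(k+\tsfrac{1+\eps}{2+\eps})/\G(k+1)$ and verify
\be{
\frac{a_{k+1}}{a_k}=\frac{k+\tsfrac{1+\eps}{2+\eps}}{k+1}=\frac{(k+1)(2+\eps)-1}{(k+1)(2+\eps)},
}
which is the reciprocal of the multiplier in Step 2. Therefore the process $M_k:=a_k(W_{k,j}\t{1,\eps}+1+\eps)$ satisfies $\IE[M_{k+1}\mid\mathcal{F}_k]=M_k$ for all $k\geq j-1$, proving the martingale claim.

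\textbf{Step 4 (mean formula).} Taking expectations in the martingale identity yields $a_k\,\IE[W_{k,j}\t{1,\eps}+1+\eps]=a_{j-1}(1+\eps)$, and rearranging gives the stated closed form. The only delicate point is the bookkeeping at the birth step of vertex $j$, which I address in Step 2; everything else is routine telescoping, so no real obstacle arises beyond correctly identifying the normalization $a_k$.
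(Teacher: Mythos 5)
Your proof is correct: the one-step attachment probability, the multiplicative recursion (including the boundary check at $k=j-1$, which also covers $j=1$ since the self-loop probability there is $1$), and the gamma-ratio normalization $a_k=\Gamma\bigl(k+\tfrac{1+\eps}{2+\eps}\bigr)/\Gamma(k+1)$ all check out, and the mean formula follows by telescoping exactly as you say. The paper itself gives no proof, citing M\'ori, and your argument is essentially the standard direct verification underlying that cited result, so there is nothing to add.
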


We also need asymptotic estimates for the ratio of gamma functions. The next result
follows from Stirling's approximation. 
\begin{lemma}\label{lemgamrat}
For fixed $a,b>0$, as $z\to\infty$, 
\ba{
\frac{\Gamma(z+a)}{\Gamma(z+b)}=z^{a-b}+\bigo(z^{a-b-1}).
}
\end{lemma}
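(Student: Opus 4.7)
The plan is to take logarithms and use Stirling's asymptotic expansion
\[
\log \Gamma(x) = \left(x-\tfrac{1}{2}\right)\log x - x + \tfrac{1}{2}\log(2\pi) + O(1/x) \quad \text{as } x \to \infty,
\]
applied to both $x=z+a$ and $x=z+b$, and then carefully track terms to at least order $1/z$.

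First I would write, using Stirling,
\[
\log \Gamma(z+a) = \left(z+a-\tfrac{1}{2}\right)\log(z+a) - (z+a) + \tfrac{1}{2}\log(2\pi) + O(1/z),
\]
and the analogous expression with $b$ in place of $a$. The next step is to expand
\[
\log(z+a) = \log z + \log\!\left(1+\tfrac{a}{z}\right) = \log z + \tfrac{a}{z} + O(1/z^2),
\]
so that, after multiplying out,
\[
\left(z+a-\tfrac{1}{2}\right)\log(z+a) = \left(z+a-\tfrac{1}{2}\right)\log z + a + O(1/z),
\]
and similarly for $b$. Taking the difference cancels the $\tfrac{1}{2}\log(2\pi)$ and the $-(z+a)+a$, $-(z+b)+b$ terms, leaving
\[
\log\frac{\Gamma(z+a)}{\Gamma(z+b)} = (a-b)\log z + O(1/z).
\]

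Finally I would exponentiate and use $e^{O(1/z)} = 1 + O(1/z)$ to conclude
\[
\frac{\Gamma(z+a)}{\Gamma(z+b)} = z^{a-b}\bklr{1+O(1/z)} = z^{a-b} + O(z^{a-b-1}),
\]
as claimed. There is no real obstacle here; the only thing to be careful about is keeping the Stirling expansion to enough orders that the $O(1/z)$ remainder in the log is genuine (in particular, retaining the $-\tfrac{1}{2}$ in $(x-\tfrac{1}{2})\log x$ is needed so that, when taking the difference, no $\log z/z$ residual appears that would spoil the stated error term).
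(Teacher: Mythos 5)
Your proof is correct and follows exactly the route the paper indicates, namely Stirling's approximation applied to $\log\Gamma(z+a)$ and $\log\Gamma(z+b)$ and a careful expansion of the difference to order $1/z$ before exponentiating. The paper itself only remarks that the lemma ``follows from Stirling's approximation,'' so your argument simply supplies the details of that same approach.
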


The next lemma provides a nice asymptotic expression
for  expectations appearing in the proofs below.

\begin{lemma}\label{lem3}
If $n\geq i$ and $-\delta< m\in \IN$,
and $\mu_{n,i}\t{m,\delta}:=\IE W_{n,i}$, then
\ba{
\left|\frac{\mu_{n,i}\t{m,\delta}}{m+\delta}+1-\left(\frac{n}{i}\right)^{1/(2+\delta/m)}\right|\leq C_{m,\delta} \left(\frac{n}{i}\right)^{1/(2+\delta/m)}\frac{1}{i},
}
\ba{
\left|\frac{m+\delta}{\mu_{n,i}\t{m,\delta}+m+\delta}-\left(\frac{i}{n}\right)^{1/(2+\delta/m)}\right|\leq \frac{C_{m,\delta}\left(\frac{i}{n}\right)^{1/(2+\delta/m)}}{i}.
}
\end{lemma}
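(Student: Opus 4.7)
\bigskip\noindent\textbf{Proof plan for Lemma~\ref{lem3}.}
The plan is to get an exact closed form for $\mu_{n,i}\t{m,\delta}$ using the martingale of Theorem~\ref{thmmart}, extract the leading order via the gamma ratio asymptotics of Lemma~\ref{lemgamrat}, and then deduce the second inequality from the first by a Taylor expansion of $1/(1+x)$.

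\medskip
First, since $G_n^{m,\delta}$ is obtained by collapsing $m$ consecutive vertices of $G_{nm}^{1,\delta/m}$ into one, the in-degree $W_{n,i}$ of vertex~$i$ in $G_n^{m,\delta}$ is the sum of the in-degrees of vertices $(i-1)m+1,\dots,im$ in $G_{nm}^{1,\delta/m}$. Applying Theorem~\ref{thmmart} with $\eps=\delta/m$ and writing $\alpha:=(1+\delta/m)/(2+\delta/m)$, so that $1-\alpha=1/(2+\delta/m)$, I get
\be{
\mu_{n,i}\t{m,\delta}+m+\delta=\frac{m+\delta}{m}\,\frac{\Gamma(nm+1)}{\Gamma(nm+\alpha)}\sum_{j=(i-1)m+1}^{im}\frac{\Gamma(j-1+\alpha)}{\Gamma(j)},
}
after using $m(1+\delta/m)=m+\delta$ to absorb the additive constants. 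This is the one identity that does all the work.

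\medskip
Next I apply Lemma~\ref{lemgamrat}. The prefactor gives $\Gamma(nm+1)/\Gamma(nm+\alpha)=(nm)^{1-\alpha}\bklr{1+\bigo(1/n)}$. Each summand gives $\Gamma(j-1+\alpha)/\Gamma(j)=j^{\alpha-1}\bklr{1+\bigo(1/j)}$, and because all $j$ in the block satisfy $j=im\bklr{1+\bigo(1/i)}$, raising to the power $\alpha-1$ yields $j^{\alpha-1}=(im)^{\alpha-1}\bklr{1+\bigo(1/i)}$. Summing the $m$ terms, the block sum equals $m(im)^{\alpha-1}\bklr{1+\bigo(1/i)}$. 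Multiplying everything together the $m^{1-\alpha}m^{\alpha-1}$ factors cancel and I obtain
\be{
\frac{\mu_{n,i}\t{m,\delta}}{m+\delta}+1=\bbklr{\frac{n}{i}}^{1/(2+\delta/m)}\bklr{1+\bigo(1/i)},
}
with the constant in $\bigo(\cdot)$ depending only on $m$ and $\delta$. Rearranging gives the first inequality. (For the few small values of $i$ where the asymptotic estimate is weak, the bound is trivial after enlarging $C_{m,\delta}$.)

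\medskip
The second inequality is then immediate algebra. Writing $X:=\tfrac{\mu_{n,i}\t{m,\delta}}{m+\delta}+1$ and $Y:=(n/i)^{1/(2+\delta/m)}$, the first bound is $|X-Y|\leq C_{m,\delta}Y/i$, and since $X\geq 1$ one has
\be{
\bbabs{\frac{1}{X}-\frac{1}{Y}}=\frac{|X-Y|}{XY}\leq\frac{C_{m,\delta}}{iX}\leq\frac{C_{m,\delta}}{i}\cdot\frac{1}{Y}\cdot\frac{Y}{X},
}
and $Y/X=1+\bigo(1/i)$ is bounded. Hence $|1/X-(i/n)^{1/(2+\delta/m)}|\leq C_{m,\delta}(i/n)^{1/(2+\delta/m)}/i$, which is the second claim. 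The only mildly delicate step in the whole argument is keeping the implicit constants in Lemma~\ref{lemgamrat} uniform in $j$ across the block $(i-1)m+1\leq j\leq im$, but this is harmless because the block has fixed width~$m$ and all $j$ in it are comparable to $im$.
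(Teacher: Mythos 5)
Your proposal is correct and follows essentially the same route as the paper: the exact formula for $\mu_{n,i}\t{m,\delta}+m+\delta$ from Theorem~\ref{thmmart} summed over the $m$ collapsed vertices, the gamma-ratio asymptotics of Lemma~\ref{lemgamrat}, and then the second bound deduced from the first (the paper simply asserts this last step, while you write out the $|1/X-1/Y|$ algebra, and you handle the small-$i$/$i=1$ cases by enlarging the constant where the paper treats $i=1$ separately).
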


\begin{proof}
The second inequality follows directly from the first.  For the first assertion,
Theorem~\ref{thmmart} implies that for $\eps>-1$  and $\mu_{k,j}\t{1,\eps}:=\IE W_{k,j}\t{1,\eps}$ for $k\geq j-1$,
\ba{
\mu_{k,j}\t{1,\eps}=(1+\eps)\left[\frac{\Gamma(j-1+\tsfrac{1+\eps}{2+\eps})\Gamma(k+1)}{\Gamma(j)\Gamma(k+\tsfrac{1+\eps}{2+\eps})}-1\right].
}
The construction of $G_{n}^{m,\delta}$ implies that
\ba{
\mu_{n,i}\t{m,\delta}=\sum_{j=1}^m\mu_{nm,(i-1)m+j}\t{1,\delta/m},
}
so we find that 
\ban{ 
&\mu_{n,i}\t{m,\delta}+(m+\delta) \notag \\
&=(1+\tsfrac{\delta}{m})\frac{\Gamma(nm+1)}{\Gamma(nm+\tsfrac{1+\delta/m}{2+\delta/m})}\left[\sum_{j=1}^m\frac{\Gamma((i-1)m+j-1+\tsfrac{1+\delta/m}{2+\delta/m})}{\Gamma((i-1)m+j)}\right]. \label{muas1}
}
Using now Lemma~\ref{lemgamrat} for the ratios of gamma functions, we find for $i>1$,
\ba{
\frac{\mu_{n,i}\t{m,\delta}}{m+\delta}+1&=\frac{1}{m}\left((nm)^{\frac{1}{2+\delta/m}}+\bigo(n^{-\frac{1+\delta/m}{2+\delta/m}})\right) \\
&\qquad\times\left(m((i-1)m)^{-\frac{1}{2+\delta/m}}+\sum_{j=1}^m\bigo(i^{-\frac{3+\delta/m}{2+\delta/m}})\right).
}
The lead term equals $(n/i)^{1/(2+\delta)}$  (up to the error in changing $i-1$ to $i$), and the second order term is 
easily seen to be as desired. In the case that $i=1$, similar arguments starting from~\eq{muas1} yield the appropriate
complementary result.
\end{proof}

To prove Items~\ref{it2} and ~\ref{it3} we
have to bound the total variation distance between negative binomial
distributions having different `$p$' parameters. The next
result is sufficient for our purposes.

\begin{lemma}\label{lemrr}
If $r>0$ and $0\leq \eps<p\leq 1$, then
\ban{
        \dtv \bklr{\NB(r,p),\NB(r,p-\eps)}
                \leq \frac{r \eps}{p-\eps}. \label{28}
}
\end{lemma}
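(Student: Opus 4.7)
The plan is to bound the total variation distance by expressing the difference of point mass functions as an integral in the parameter $q$, then integrating a crude absolute-value bound. First I would write $P_q(k) := \IP(\NB(r,q) = k) = \frac{\Gamma(r+k)}{k!\Gamma(r)}(1-q)^k q^r$ and compute
\ba{
\frac{d}{dq}P_q(k) = P_q(k)\bklr{\frac{r}{q} - \frac{k}{1-q}},
}
(which is immediate from differentiating $\log P_q(k) = \text{const} + k\log(1-q) + r\log q$), so that by the fundamental theorem of calculus $P_{p-\eps}(k) - P_p(k) = -\int_{p-\eps}^p P_q(k)(r/q - k/(1-q))\,dq$.

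Next, using the representation~\eq{dtv1} of the total variation distance, the triangle inequality, and Tonelli's theorem (all integrands are nonnegative), I would deduce
\ba{
2\dtv\bklr{\NB(r,p), \NB(r,p-\eps)} \leq \int_{p-\eps}^p \IE\bbabs{\frac{r}{q} - \frac{X_q}{1-q}}\,dq,
}
where $X_q \sim \NB(r,q)$.

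The key observation to close out the proof is that $X_q/(1-q)$ is nonnegative with mean $r/q$, so the trivial bound $|Y - \IE Y| \leq Y + \IE Y$ for $Y \geq 0$ yields $\IE|r/q - X_q/(1-q)| \leq 2r/q$. Substituting and integrating then produces $2\dtv \leq 2r\log(p/(p-\eps))$, and the elementary inequality $\log(1+x) \leq x$ applied at $x = \eps/(p-\eps) \geq 0$ gives exactly the claimed $r\eps/(p-\eps)$.

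I do not anticipate a real obstacle, but the point that requires a moment's thought is the choice of bound on the integrand. A Cauchy--Schwarz estimate only gives $\IE|r/q - X_q/(1-q)| \leq \sqrt{r/(q(1-q))}$, which after integration produces a rate with the wrong dependence on $r$ and on $1-p$. It is the use of the nonnegativity of $X_q/(1-q)$, rather than its second moment, that extracts the sharp linear-in-$r$ bound in the statement.
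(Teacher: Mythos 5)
Your proof is correct, and it takes a genuinely different route from the paper. The paper's proof simply invokes Proposition~2.5 of~\cite{adjo06}, which gives the exact representation $\dtv\bklr{\NB(r,p),\NB(r,p-\eps)}=(r+l-1)\int_{p-\eps}^p q(u)\,du$ with $0\leq q(u)\leq 1$ and with the crossing index $l$ bounded by $r(1-p+\eps)/(p-\eps)+1$; inserting these bounds immediately yields $r\eps/(p-\eps)$. You instead differentiate the point probabilities in the parameter, note that $\sum_k\babs{\partial_q P_q(k)}=\IE\babs{r/q-X_q/(1-q)}\leq 2r/q$ using only nonnegativity and the fact that $X_q/(1-q)$ has mean $r/q$, and integrate over $q\in[p-\eps,p]$, with $\log(1+x)\leq x$ recovering exactly the same constant. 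What your argument buys is self-containedness: it needs nothing beyond the mean of the negative binomial, and the same scheme bounds $\dtv$ along any smooth one-parameter family by $\tfrac{1}{2}\int\IE\babs{\partial_q\log P_q(X_q)}\,dq$. What the paper's route buys is brevity on the page and, in principle, sharpness, since the Adell--Jodr\'a formula is an identity before the crude bounds on $q(u)$ and $l$ are applied (and it also covers noninteger $r$, as the paper notes). One cosmetic point in your write-up: when $p=1$ the factor $k/(1-q)$ at the endpoint $q=1$ should be read through the expanded product-rule form of $\partial_q P_q(k)$, which is finite there; since this concerns a single point of the integration interval it costs nothing.
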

\begin{proof}
Proposition~2.5 of~\cite{adjo06} implies
that for $r>0$ (their statement is for $r\in\IN$, but the
same proof works for all $r>0$),
\ban{
 \dtv \bklr{\NB(r,p),\NB(r,p-\eps)}=(r+l-1)\int_{p-\eps}^p q(u) du, \label{ag1}
}
where $0\leq q(u) \leq 1$ and 
\ba{
l\leq \frac{r (1-p+\eps)}{(p-\eps)}+1.
}
Using these bounds on $q$ and $l$ in~\eq{ag1} implies the lemma.
\end{proof}


Our final lemma is useful for handling total variation distance
for conditionally defined random variables.

\begin{lemma}\label{lem4} Let $W$ and $V$ be random variables and let $X$ be a
random element defined on the same probability space. Then
\be{
        \dtv(\law(W), \law(V))\leq \IE \dtv(\law(W|X), \law(V|X)).
}
\end{lemma}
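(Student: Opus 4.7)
The plan is to work directly from the definition \eqref{dtv} of total variation distance. First I would fix an arbitrary measurable $A \subseteq \IZ_+$ (or more generally, a measurable set in the appropriate state space) and use the tower property of conditional expectation to write
\[
\IP(W \in A) - \IP(V \in A) = \IE\bkle{\IP(W \in A \mid X) - \IP(V \in A \mid X)}.
\]
Applying the triangle inequality (i.e., Jensen's inequality for the absolute value), this yields
\[
\babs{\IP(W \in A) - \IP(V \in A)} \leq \IE\babs{\IP(W \in A \mid X) - \IP(V \in A \mid X)}.
\]

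Next, for each fixed realization of $X$, the integrand on the right-hand side is bounded by $\dtv(\law(W\mid X), \law(V\mid X))$, since the conditional probability of a single event is dominated by the supremum over all events of the same difference. Monotonicity of expectation then gives
\[
\babs{\IP(W \in A) - \IP(V \in A)} \leq \IE \dtv(\law(W\mid X), \law(V\mid X)).
\]
Crucially, the right-hand side no longer depends on $A$, so taking the supremum over $A \subseteq \IZ_+$ on the left produces the desired inequality.

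The only mildly subtle point is the order in which the supremum and the expectation appear: a naive attempt to write $\sup_A |\IP(W\in A\mid X) - \IP(V\in A\mid X)|$ inside the expectation would require measurability of this supremum in $X$, but this is unnecessary since we obtain the bound uniformly in $A$ after integrating. I expect no further obstacles, as the argument is a clean application of conditioning plus the definition of total variation distance.
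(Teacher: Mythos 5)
Your proof is correct and follows essentially the same route as the paper: the paper conditions on $X$ and applies the tower property plus the triangle inequality to test functions $f$ with values in $[0,1]$, which is just the functional form of your argument with indicators $\I[\cdot\in A]$. Your handling of the supremum-versus-expectation order (bounding uniformly in $A$ before taking the sup) is exactly the right way to sidestep the measurability issue, and nothing further is needed.
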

\begin{proof} If $f:\IR\rightarrow[0,1]$, then
\be{
        \abs{\IE [f(W)-f(V)]}\leq \IE\abs{\IE[f(W)-f(V)|X]}
        \leq \IE \dtv\bklr{\law(W|X), \law(V|X)}. \qedhere
}
\end{proof}

\begin{proof}[Proof of Theorem~\ref{thmmain}]
 
Using \eq{28} and Lemma~\ref{lem3} we easily obtain
\be{
        \dtv\bklr{ \NB(m+\delta, \tsfrac{m+\delta}{\mu_{n,i}+m+\delta}), \NB(m+\delta, (i/n)^{1/(2+\delta/m)})}  \leq \frac{C_{m,\delta}}{i}
}
and applying Lemma~\ref{lem4}
we find
\be{
        \dtv\bklr{ \NB(m+\delta, \tsfrac{m+\delta}{\mu_{n,I}+m+\delta}), \NB(m+\delta, (I/n)^{1/(2+\delta/m)})}
                \leq \frac{C_{m,\delta}\log(n)}n,
}
which is Item~\ref{it2} above. Now, we couple $U$ to $I$ by writing $U=I/n-V$,
where $V$ is uniform on $(0,1/n)$
and independent of $I$.
From here, use \eq{28}, Lemma~\ref{lem4}, and then the easy fact that
for $i\geq1$ and $0<a<1$,
\ba{
i^a-(i-1)^a\leq i^{a-1},
}
to find
\ba{
&\dtv\bklr{ \NB(m+\delta, (I/n)^{1/(2+\delta/m)}), \K(m,\delta) } \\
&\qquad=\dtv\bklr{  \NB(m+\delta, (I/n)^{1/(2+\delta/m)}), \NB(m+\delta, U^{1/(2+\delta/m)}) } \\
        	&\qquad\leq \frac{C_{m,\delta}}{n}\sum_{i=1}^n \frac{(i/n)^{1/(2+\delta/m)}-((i-1)/n)^{1/(2+\delta/m)}}{(i/n)^{1/(2+\delta/m)}} \\
               & \qquad\leq \frac{C_{m,\delta}\log(n)}n,
}
which is Item~\ref{it3} above. Finally, applying Lemma~\ref{lem4} to
Theorem~\ref{thm8} yields the claim
in Item~\ref{it1} above
so that Theorem~\ref{thmmain} is proved.
\end{proof}

The remainder of the section is devoted to the proof of Theorem~\ref{thm8}.
Since we want to apply our negative binomial 
approximation framework we must first construct a random variable
having the $(m+\delta)$-equilibrium distribution of $W_{n,i}:=W_{n,i}\t{m,\delta}$. According to Definition~\ref{req}, we
first construct a variable having the size bias
distribution of $W_{n,i}$.  To facilitate this construction we need some
auxiliary variables.

We mostly work with 
$G_n^{m,\delta}$ through the intermediate construction of
$G_{nm}^{1,\delta/m}$ discussed in the introduction. 
To fix notation, if for $k\geq j$, $W_{k,j}\t{1,\delta/m}$ is the degree
of vertex $j$ in $G_{k}^{1,\delta/m}$, then we write
\ban{
W_{n,i}=\sum_{j=1}^m W_{nm,m(i-1)+j}\t{1,\delta/m}. \label{decomp1}
}
Further, if we let $X_{j,i}\t{\delta/m}$ be the indicator that
vertex $j$ attaches to vertex $i$ in $G_j^{1,\delta/m}$ (and hence also in
$G_k^{1,\delta/m}$ for $j\leq k \leq mn$), then we also have
\ban{
W_{nm,m(i-1)+j}\t{1,\delta/m}=\sum_{k=m(i-1)+j}^{mn}X_{k,m(i-1)+j}\t{\delta/m}. \label{decomp2}
}

The following well-known result allows us to use the decomposition of $W_{n,i}$ into a sum of indicators
as per~\eq{decomp1} and~\eq{decomp2} to
size bias $W_{n,i}$; see e.g.\ Proposition~2.2 of~\cite{cgs11} and the
discussion thereafter.

\begin{proposition}\label{prop2}
Let $X_1, \ldots, X_n$ be zero-one random variables such that $\IP(X_j=1)=p_j$.
For each $k=1,\ldots, n$, let $(X_j^{(k)})_{j\not=k}$ have the distribution of
$(X_j)_{j\not=k}$ conditional on $X_k=1$.
If $X=\sum_{j=1}^n X_j$, $\mu=\IE[X]$,
and $K$ is chosen independent of the variables above with $\IP(K=k)=p_k/\mu$,
then $X^s=\sum_{j\not=K} X_j^{(K)} +1$ has the size bias distribution of $X$.
\end{proposition}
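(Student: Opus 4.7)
The plan is to verify the defining property of size biasing directly: a non-negative integer valued random variable $X^s$ has the size bias distribution of $X$ if and only if $\IE[X f(X)] = \mu\, \IE[f(X^s)]$ for every bounded $f:\IZ_+\to\IR$ (this follows from the definition by taking $f$ to be an indicator). So it suffices to show that the candidate $X^s := \sum_{j\neq K} X_j^{(K)}+1$ satisfies this identity.

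First I would expand $\IE[X f(X)]$ using the representation $X=\sum_{k=1}^n X_k$ and linearity, obtaining
\[
\IE[X f(X)] = \sum_{k=1}^n \IE[X_k\, f(X)] = \sum_{k=1}^n p_k\, \IE[f(X)\mid X_k=1],
\]
where the last step uses that $X_k$ is zero-one so $\IE[X_k h]=\IP(X_k=1)\IE[h\mid X_k=1]$ for any integrable $h$. Next I would rewrite the conditional expectation: conditional on $X_k=1$, the random variable $X$ equals $1+\sum_{j\neq k}X_j$, and by the definition of $(X_j^{(k)})_{j\neq k}$ the latter sum has the same distribution (conditional on $X_k=1$) as $\sum_{j\neq k} X_j^{(k)}$. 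Hence
\[
\IE[f(X)\mid X_k=1] = \IE\Bigl[f\Bigl(1+\sum_{j\neq k} X_j^{(k)}\Bigr)\Bigr].
\]

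Putting these pieces together and using that $K$ is independent of $(X_j^{(k)})_{j,k}$ with $\IP(K=k)=p_k/\mu$,
\[
\IE[X f(X)] = \mu \sum_{k=1}^n \frac{p_k}{\mu}\, \IE\Bigl[f\Bigl(1+\sum_{j\neq k}X_j^{(k)}\Bigr)\Bigr] = \mu\, \IE\Bigl[f\Bigl(1+\sum_{j\neq K} X_j^{(K)}\Bigr)\Bigr] = \mu\, \IE[f(X^s)],
\]
which is exactly the required characterization.

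There is no real obstacle here; the only point requiring minor care is the step where the conditional distribution of $\sum_{j\neq k}X_j$ given $X_k=1$ is replaced by the unconditional distribution of $\sum_{j\neq k}X_j^{(k)}$, which is precisely the content of the definition of $X_j^{(k)}$, and the independence of $K$ from the $X_j^{(k)}$ so that one can average over $k$ with weights $p_k/\mu$.
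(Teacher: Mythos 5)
Your proof is correct; the paper does not prove this proposition itself but cites Proposition~2.2 of~\cite{cgs11}, and your argument is exactly the standard one underlying that reference: verify the characterizing identity $\IE[Xf(X)]=\mu\,\IE[f(X^s)]$ by expanding $X$ into its indicator summands, conditioning on $X_k=1$, and averaging over $K$ with weights $p_k/\mu$. No gaps; the only cosmetic caveat is that terms with $p_k=0$ should be interpreted as contributing zero, which is harmless since $\IP(K=k)=0$ for such $k$.
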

Roughly, Proposition~\ref{prop2} implies that in order to size bias $W_{n,i}$,
we choose an indicator
$X_{K,L}\t{\delta/m}$ where for $l=m(i-1)+1,\ldots, mi$, $k=l,\ldots,mn$, $\IP(K=k,L=l)$ is proportional to
$\IP(X_{k,l}\t{\delta/m}=1)$ (and zero for other values),
then attach vertex
$K$ to vertex $L$ and sample the remaining edges conditional on this event.
Note that given $(K,L)=(k,l)$, in the graphs $G_j^{1,\delta/m}$, $1\leq j < l$ and
$k<j\leq nm$, this conditioning does not change the original rule for generating
the preferential attachment graph given $G_{j-1}^{1,\delta/m}$.
The following lemma implies the remarkable fact that in order to generate the
graphs $G_j^{1,\delta/m}$ for $l \leq j < k$
conditional on $X_{k,l}\t{\delta/m}=1$  and $G_{l-1}$,
we attach edges following the same rule as preferential attachment, but include
the edge from vertex $k$
to vertex $l$ in the degree count.

\begin{lemma}\label{lemcondlat}
Retaining the notation and definitions above,
for $l,s \leq j<k$ we have
\ben{\label{tt}
        \IP(X_{j,s}\t{\delta/m}=1 | X_{k,l}\t{\delta/m}=1,G_{j-1}^{1,\delta/m}) = \frac{\I[s=l]+W_{j-1,s}\t{1,\delta/m}+\delta/m+1}{j(2+\delta/m)},
}
where we define $W_{j-1,j}\t{1,\delta/m}=0$.
\end{lemma}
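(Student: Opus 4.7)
My plan is to apply Bayes' rule and then evaluate each factor using the preferential attachment rule together with the martingale from Theorem~\ref{thmmart}. Writing $\eps:=\delta/m$ and $c:=(1+\eps)/(2+\eps)$ for brevity, I would begin from
\[
\IP(X_{j,s}^{(\eps)}=1 \mid X_{k,l}^{(\eps)}=1, G_{j-1}^{1,\eps}) = \frac{\IP(X_{j,s}^{(\eps)}=1 \mid G_{j-1}^{1,\eps})\cdot\IE\bklr{\IP(X_{k,l}^{(\eps)}=1 \mid G_j^{1,\eps}) \mid X_{j,s}^{(\eps)}=1, G_{j-1}^{1,\eps}}}{\IP(X_{k,l}^{(\eps)}=1 \mid G_{j-1}^{1,\eps})}.
\]
The first factor in the numerator equals $(W_{j-1,s}^{(1,\eps)}+1+\eps)/(j(2+\eps)-1)$ by the PA rule, with the convention $W_{j-1,j}^{(1,\eps)}=0$ covering the self-loop case $s=j$ uniformly.

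For the probabilities of the future event $\{X_{k,l}^{(\eps)}=1\}$, I would start from the one-step identity $\IP(X_{k,l}^{(\eps)}=1 \mid G_{k-1}^{1,\eps}) = (W_{k-1,l}^{(1,\eps)}+1+\eps)/(k(2+\eps)-1)$ and then use the tower property together with Theorem~\ref{thmmart} to obtain, for $t\in\{j-1,j\}$,
\[
\IP(X_{k,l}^{(\eps)}=1 \mid G_t^{1,\eps}) = \frac{W_{t,l}^{(1,\eps)}+1+\eps}{k(2+\eps)-1}\cdot\frac{\Gamma(t+c)\,\Gamma(k)}{\Gamma(t+1)\,\Gamma(k-1+c)}.
\]
Given $X_{j,s}^{(\eps)}=1$, the update $W_{j,l}^{(1,\eps)} = W_{j-1,l}^{(1,\eps)}+\I[s=l]$ is deterministic, even in the boundary cases $s=j$ or $l=j$ (again using the $W_{j-1,j}^{(1,\eps)}=0$ convention). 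Taking the Bayes ratio, all $k$-dependent quantities cancel, and the remaining ratio of $t$-dependent gamma factors telescopes to $(j-1+c)/j = (j(2+\eps)-1)/(j(2+\eps))$. Combined with the $(j(2+\eps)-1)^{-1}$ coming from $\IP(X_{j,s}^{(\eps)}=1 \mid G_{j-1}^{1,\eps})$, this precisely produces the claimed denominator $j(2+\eps)$.

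What remains in the numerator is $(W_{j-1,s}^{(1,\eps)}+1+\eps)(W_{j-1,l}^{(1,\eps)}+\I[s=l]+1+\eps)/(W_{j-1,l}^{(1,\eps)}+1+\eps)$, which must match $\I[s=l]+W_{j-1,s}^{(1,\eps)}+1+\eps$. This reduces to the elementary identity $A(B+\I[s=l])=B(A+\I[s=l])$ with $A=W_{j-1,s}^{(1,\eps)}+1+\eps$ and $B=W_{j-1,l}^{(1,\eps)}+1+\eps$: both sides equal $AB$ when $s\neq l$, and both equal $A(A+1)$ when $s=l$ (since then $A=B$). The main obstacle is not mathematical depth but careful bookkeeping: the gamma-ratio must collapse to exactly the right factor to convert $j(2+\eps)-1$ into $j(2+\eps)$, and the boundary cases $s=j$ and $l=j$ need to be handled consistently via the zero-convention.
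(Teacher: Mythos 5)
Your proposal is correct and follows essentially the same route as the paper: the same Bayes-rule decomposition conditional on $G_{j-1}^{1,\delta/m}$, evaluation of the two forward probabilities via the M\'ori martingale of Theorem~\ref{thmmart} (your $\IP(X_{k,l}\t{\delta/m}=1\mid G_t^{1,\delta/m})$ at $t=j-1,j$ is just the paper's conditional expectations of $W_{k-1,l}\t{1,\delta/m}$ repackaged), cancellation of the $k$-dependent factors with the gamma ratio collapsing to $(j(2+\delta/m)-1)/(j(2+\delta/m))$, and the final case split $s=l$ versus $s\neq l$. No gaps.
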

\begin{proof}
By the definition of conditional probability, we write
\ben{\label{23}\begin{split}
&\IP(X_{j,s}\t{\delta/m}=1 | X_{k,l}\t{\delta/m}=1,G_{j-1}^{1,\delta/m})\\
         &\quad= \frac{\IP(X_{j,s}\t{\delta/m}=1| G_{j-1}^{1,\delta/m}) \IP(X_{k,l}\t{\delta/m}=1 | X_{j,s}\t{\delta/m}=1,
G_{j-1}^{1,\delta/m})}{\IP(X_{k,l}\t{\delta/m}=1| G_{j-1}^{1,\delta/m})},
\end{split}}
and we calculate the three probabilities appearing above.  
First note
\be{
        \IP(X_{j,s}\t{\delta/m}=1| G_{j-1}^{1,\delta/m}) = \frac{W_{j-1,s}\t{1,\delta/m}+1+\tsfrac{\delta}{m}}{(2+\delta/m)j-1},
}
which implies
\be{
        \IP(X_{k,l}\t{\delta/m}=1| G_{j-1}^{1,\delta/m}) = \frac{\IE [W_{k-1,l}\t{1,\delta/m}+1+\tsfrac{\delta}{m} | G_{j-1}^{1,\delta/m}]}{(2+\delta/m)k-1}
}
and
\ba{
	&\IP(X_{k,l}\t{\delta/m}=1 | X_{j,s}\t{\delta/m}=1, G_{j-1}^{1,\delta/m}) \\
               &\qquad= \frac{\IE [W_{k-1,l}\t{1,\delta/m} +1+\tsfrac{\delta}{m}| X_{j,s}\t{\delta/m}=1, G_{j-1}^{1,\delta/m}]}{(2+\delta/m)k-1}.
}
Using Theorem~\ref{thmmart}, it easy to see that
\ba{
\IE [W_{k-1,l}\t{1,\delta/m}+1+\tsfrac{\delta}{m} | G_{j-1}^{1,\delta/m}]
=\frac{\Gamma(k)\Gamma(j-1+\tsfrac{1+\delta/m}{2+\delta/m})}{\Gamma(k-1+\tsfrac{1+\delta/m}{2+\delta/m})\Gamma(j)}(W_{j-1,l}\t{1,\delta/m}+1+\tsfrac{\delta}{m}),
}
and also
\ba{
\IE [W_{k-1,l}\t{1,\delta/m}& +1+\tsfrac{\delta}{m}| X_{j,s}\t{\delta/m}=1, G_{j-1}^{1,\delta/m}] \\
&=\frac{\Gamma(k)\Gamma(j+\tsfrac{1+\delta/m}{2+\delta/m})}{\Gamma(k-1+\tsfrac{1+\delta/m}{2+\delta/m})\Gamma(j+1)}(W_{j-1,l}\t{1,\delta/m}+\I[s=l]+1+\tsfrac{\delta}{m}).
}
Combining these calculations with~\eq{23} and simplifying (using in particular that $\Gamma(x+1)=x\Gamma(x)$) implies
\ban{
&\IP(X_{j,s}\t{\delta/m}=1 | X_{k,l}\t{\delta/m}=1,G_{j-1}^{1,\delta/m}) \notag\\
         &\quad=\frac{1}{j(2+\delta/m)}\frac{(W_{j-1,l}\t{1,\delta/m}+\I[s=l]+1+\tsfrac{\delta}{m})(W_{j-1,s}\t{1,\delta/m}+1+\tsfrac{\delta}{m})}{W_{j-1,l}\t{1,\delta/m}+1+\tsfrac{\delta}{m}}. \label{tt1}
}
Considering the cases $s=l$ and $s\not=l$ separately yields that~\eq{tt1} equals~\eq{tt}.
\end{proof}

The previous lemma suggests the following (embellished) construction of
$(W_{n,i} | X_{k,l}\t{\delta/m}=1)$.  Here and below we 
denote quantities related to this construction by amending $(k,l)$. First we
generate ${G}_{l-1}^{1,\delta/m}(k,l)$, a graph with $l-1$ vertices, according to the usual
preferential attachment model. At this point, if $l\not=k$, vertex $l$ and $k$
are added to the graph, along with a vertex labeled $i'$ with an edge to it
emanating from vertex $k$. Given  ${G}_{l-1}^{1,\delta/m}(k,l)$ and these additional vertices and
edges, we generate ${G}_l^{1,\delta/m}(k,l)$ by connecting vertex $l$ to a vertex randomly
chosen from the vertices $1, \ldots, l, i'$ proportional to their ``degree weight," where
vertex $l$ has degree weight $1+\delta/m$ (from the out-edge) and $i'$ has degree one (from the in-edge
emanating from vertex $k$), and the remaining vertices have degree weight equal to their
degree plus $\delta/m$.   For $l < j <k$, we generate the graphs $G_{j}^{1,\delta/m}(k,l)$
recursively from $G_{j-1}^{1,\delta/m}(k,l)$ by connecting vertex $j$ to a vertex randomly
chosen from the vertices $1, \ldots, j, i'$ proportional to their degree weight, where
$j$ has degree weight $1+\delta/m$ (from the out-edge).  Note that none of the vertices $1,
\ldots, k-1$ connect to vertex $k$. Also define $G_k^{1,\delta/m}(k,l)=G_{k-1}^{1,\delta/m}(k,l)$.
If $l=k$, we attach vertex $k$ to $i'$ and denote the
resulting graph by ${G}_k^{1,\delta/m}(k,l)$.
For all values $(k,l)$, if
$j=k+1, \ldots, nm$, we generate $G_j^{1,\delta/m}(k,l)$ from $G_{j-1}^{1,\delta/m}(k,l)$ according to 
usual preferential
attachment among the vertices $1, \ldots, j, i'$.

We have a final bit of notation before stating relevant properties of these objects.
Denote the degree of vertex $j$ in this construction by $W_{nm,j}\t{1,\delta/m}(k,l)$ and
let also 
\ba{
W_{n,i}(k,l):=\sum_{j=1}^m W_{nm,m(i-1)+j}\t{1,\delta/m}(k,l).
}
Let $B_{k,l}$ be the event that in this construction all edges emanating from the vertices $m(i-1)+1,\dots,mi$
attach to one of the vertices $1,\ldots,m(i-1)$. In symbols,
\be{
B_{k,l}=\bigg\{
\begin{array}{l}
X_{s,j}\t{\delta/m}(k,l)=0 \text{ for all }  j\in\{m(i-1)+1,\dots,mi,i'\},\\
s\in\{m(i-1)+1,\dots,mi,i'\}/\{k\}
\end{array}
\bigg\}.
}
Finally, let $W'$ have the $r$-equilibrium distribution of $W_{n,i}$, independent of all else and define
\ban{
W_{n,i}^{*_r}=W_{n,i}(K,L)\I_{B_{K,L}}+W' \I_{B_{K,L}^c}. \label{ret}
}

\begin{lemma} \label{lem6}
Let $l\in \{m(i-1)+1,\ldots, mi\}$, $k \in \{l,\ldots,mn\}$ and 
retain the notation and definitions above.
\begin{enumerate}
\item $\law (W_{n,i}(k,l)+W_{nm,i'}\t{1,\delta/m}(k,l)) = \law  (W_{n,i} | X_{k,l}\t{\delta/m}=1).$
\item  If $(K,L)$ is a random vector such that
\be{
        \IP (K=k',L=l') = \frac{\IE X_{k',l'}\t{\delta/m}}{\IE W_{n,i}}, \quad k'\geq l'\in\{m(i-1)+1,\ldots, mi\},
}
then $W_{n,i}(K,L)+W_{nm,i'}\t{1,\delta/m}(K,L)$ has the size bias distribution of $W_{n,i}$.
\item Conditional on the event 
\ba{
\{W_{n,i}(K,L)+W_{nm,i'}\t{1,\delta/m}(K,L)=t\},
}
$\law(W_{n,i}(K,L)\I[B_{K,L}])=\law(U_{m+\delta,t}\I[B_{K,L}])$, where $U_{r,t}$ has 
the P\'olya urn distribution of Definition~\ref{req} and is independent of all else.
\item $W_{n,i}^{*_r}$ has the $(m+\delta)$-equilibrium distribution of $W_{n,i}$.
\end{enumerate}
\end{lemma}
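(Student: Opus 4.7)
I would prove items~\textit{(i)}--\textit{(iv)} in sequence, with~\textit{(iv)} assembled from the structural input of~\textit{(i)}--\textit{(iii)}.

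For~\textit{(i)}, the construction of $G_j^{1,\delta/m}(k,l)$ is tailor-made so that ``merging'' the auxiliary vertex~$i'$ back into vertex~$l$ recovers the conditional law of $G_j^{1,\delta/m}\mid X_{k,l}\t{\delta/m}=1$. I would verify this by induction on the construction step~$j$, matching transition probabilities. At each step $l\leq j<k$, the weight of~$l$ in the construction is $1+W_{j-1,l}\t{1,\delta/m}(k,l)+\delta/m$ and the weight of~$i'$ is $W_{j-1,i'}\t{1,\delta/m}(k,l)$; their sum reproduces exactly the boosted weight for vertex~$l$ coming from the $\I[s=l]$ term in~\eq{tt}, while the weights of every other vertex already agree with the conditional dynamics of Lemma~\ref{lemcondlat}. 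For $j\geq k$, both processes run ordinary preferential attachment on matched weights, with the conditioned edge $k\to l$ represented by the pre-existing $k\to i'$ edge. Summing the in-degrees of the $m$ vertices forming the $i$-th super-vertex then yields the stated distributional identity.

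Item~\textit{(ii)} is essentially bookkeeping: applying Proposition~\ref{prop2} to the representation $W_{n,i}=\sum_{k',l'}X_{k',l'}\t{\delta/m}$ from~\eq{decomp1}--\eq{decomp2}, choosing $(K,L)$ with weight $\IE X_{k',l'}\t{\delta/m}/\IE W_{n,i}$ and summing conditional on $X_{K,L}\t{\delta/m}=1$ produces a size-biased variable; by~\textit{(i)} this conditional sum is distributed as $W_{n,i}(K,L)+W_{nm,i'}\t{1,\delta/m}(K,L)$.

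For~\textit{(iii)}, on $B_{K,L}$ no edge from $\{m(i-1)+1,\dots,mi,i'\}\setminus\{k\}$ lands inside the super-vertex-$i$ region, so the in-degrees within the region grow only via edges arriving from outside the region (together with the forced $k\to i'$ edge). Every such incoming edge attaches to one of the $m+1$ ``slots'' proportionally to its current weight: the $m$ real vertices start with collective weight $m(1+\delta/m)=m+\delta$ and $i'$ starts with weight~$1$, and each arrival increases the chosen slot's weight by~$1$. These are exactly the dynamics of the P\'olya urn in Definition~\ref{req}, so conditional on a total of $t$ in-edges arriving at the region, the aggregated count $W_{n,i}(K,L)$ is distributed as $U_{m+\delta,t}$, with the urn process independent of everything else in the construction (in particular of $B_{K,L}$).

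Item~\textit{(iv)} then combines the pieces. By~\textit{(ii)}, $S:=W_{n,i}(K,L)+W_{nm,i'}\t{1,\delta/m}(K,L)\eqlaw W_{n,i}^s$, so by Definition~\ref{req} a P\'olya urn output $U_{m+\delta,S}$ (with an urn independent of~$S$) realises the $(m+\delta)$-equilibrium distribution of $W_{n,i}$. On~$B_{K,L}$, item~\textit{(iii)} identifies $W_{n,i}(K,L)$ with $U_{m+\delta,S}$ in conditional distribution given~$S$; on $B_{K,L}^c$, we supply $W'$ coupled to~$S$ via the same urn scheme, so that $W'$ has the equilibrium marginal and fills in the missing mass on that event. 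A direct law-of-total-probability calculation using~\textit{(iii)} then shows that the marginal law of $W_{n,i}^{*_r}$ from~\eq{ret} is exactly $\law(U_{m+\delta,S})$, i.e.\ the $(m+\delta)$-equilibrium distribution of $W_{n,i}$. The main obstacle is item~\textit{(i)}, where one must simultaneously track the transition probabilities of the conditional graph and of the construction, verifying that the auxiliary vertex~$i'$ carries the $\I[s=l]$ boost at every step; the remaining items are comparatively mechanical once this identification is clean.
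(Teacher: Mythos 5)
Your route is essentially the paper's own (the paper's proof is only a few lines, citing Proposition~\ref{prop2}, Lemma~\ref{lemcondlat}, the urn observation, and Definition~\ref{req}); your items (i)--(iii) fill in exactly those citations. Your weight bookkeeping in (i) --- vertex $l$ carrying $1+W_{j-1,l}\t{1,\delta/m}(k,l)+\delta/m$, vertex $i'$ carrying its bare in-degree (no extra $\delta/m$), and their sum matching the $\I[s=l]$-boosted weight in \eq{tt} --- is the right verification. In (iii), do make the off-by-one explicit: the conditioning total $t$ counts the forced $k\to i'$ edge as the initial black-ball weight, so only $t-1$ of the arrivals are urn draws, which is why the answer is $U_{m+\delta,t}$ as defined in Definition~\ref{req}; the paper's proof singles out precisely this point.

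The one substantive deviation is in (iv). Equation \eq{ret} takes $W'$ with the equilibrium law and \emph{independent of all else}, whereas you take $W'$ coupled to $S:=W_{n,i}(K,L)+W_{nm,i'}\t{1,\delta/m}(K,L)$ by running an independent urn on the same $S$. Be aware that this is a different construction from \eq{ret}, and that the difference is exactly what makes your law-of-total-probability step close: with your $W'$, conditionally on $S$ the output is an independent $U_{m+\delta,S}$ on both $B_{K,L}$ and $B_{K,L}^c$, and averaging over $S\eqlaw W_{n,i}^s$ (item (ii)) gives the equilibrium law by Definition~\ref{req}. If instead $W'$ is independent of everything as in \eq{ret}, the same computation leaves the residual $\sum_t\IP(U_{m+\delta,t}=w)\bklr{\IP(B_{K,L},\,S=t)-\IP(B_{K,L})\IP(S=t)}$, which vanishes only if $B_{K,L}$ were independent of $S$ --- nothing in items (i)--(iii) supplies that. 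So state explicitly that you are defining $W_{n,i}^{*_r}$ with your coupled $W'$; this is harmless downstream, since the proof of Theorem~\ref{thm8} only uses $\IP(W_{n,i}^{*_r}\neq W_{n,i})\leq \IP(W_{n,i}(K,L)\neq W_{n,i})+\IP(B_{K,L}^c)$, which holds for any choice of $W'$, but as written your argument proves item~4 for your construction rather than for \eq{ret} verbatim.
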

\begin{proof}
Items~1 and~2 follow from Proposition~\ref{prop2} and Lemma~\ref{lemcondlat}. 
Item~3 follows since under the conditioning, if $\I[B_{K,L}]=1$, then $W_{n,i}(K,L)$ is 
distributed as the number of white balls \emph{drawn} in $t-1$ draws from a P\'olya urn
started with $m+\delta$ white balls and $1$ black ball (it's $t-1$
draws, rather than $t$, since 
the initial ``black ball" degree from vertex $i'$ is included in the degree count 
$W_{n,i}(K,L)+W_{nm,i'}\t{1,\delta/m}(K,L)$).
Item~4 follows from Items~1-3, using Definition~\ref{req}.
\end{proof}

\begin{proof}[Proof of Theorem~\ref{thm8}]
We apply Theorem~\ref{thmnbst} to $\law(W_{n,i})$
with $W_{n,i}^{*_r}$ as defined by~\eq{ret}. Before constructing the coupling of $\law(W_{n,i})$ required  
in Theorem~\ref{thmnbst},
we reduce the bound $\IP(W_{n,i}^{*_r}\not=W_{n,i})$. 

First note that due to the form $W_{n,i}^{*_r}$, 
we have (no matter how $\law(W_{n,i})$ is coupled)
\ban{
\IP(W_{n,i}^{*_r}\not=W_{n,i})&=\IP(W_{n,i}(K,L)\not=W_{n,i},B_{K,L})+\IP(W'\not=W_{n,i},B_{K,L}^c)\notag \\
	&\leq \IP(W_{n,i}(K,L)\not=W_{n,i})+\IP(B_{K,L}^c). \label{fbs1}
}
We bound the second term of \eq{fbs1} as follows.
For $l\in\{m(i-1)+1,\ldots,mi\}$ and $k>mi$, we directly compute
\ban{
\IP(B_{k,l})&= \prod_{j=m(i-1)+1}^{l-1} \frac{m(i-1)(1+\tsfrac{\delta}{m})+j-1}{j(2+\tsfrac{\delta}{m})-1}\prod_{j=l}^{mi}\frac{m(i-1)(1+\tsfrac{\delta}{m})+j-1}{j(2+\tsfrac{\delta}{m})} \notag\\
	&\geq \prod_{j=m(i-1)+1}^{mi}\frac{m(i-1)(1+\tsfrac{\delta}{m})+j-1}{j(2+\tsfrac{\delta}{m})} \notag\\
	&=\frac{1}{(2+\tsfrac{\delta}{m})^m}\frac{\Gamma(m(i-1)(2+\tsfrac{\delta}{m})+m)\Gamma(m(i-1)+1)}{\Gamma(m(i-1)(2+\tsfrac{\delta}{m}))\Gamma(mi+1)} \label{bg1} \\
	&=1+\bigo(1/i),\notag
}
where in the last equality use Lemma~\ref{lemgamrat}. 
If $k\in\{m(i-1)+1,\ldots,mi\}$, then 
\ba{
\IP(B_{k,l})
&= \prod_{j=m(i-1)+1}^{l-1} \frac{m(i-1)(1+\tsfrac{\delta}{m})+j-1}{j(2+\tsfrac{\delta}{m})-1}\mathop{\prod_{j=l}^{mi}}_{j\not=k}\frac{m(i-1)(1+\tsfrac{\delta}{m})+j-1}{j(2+\tsfrac{\delta}{m})},
}
which is greater than or equal to~\eq{bg1} (since the omitted term is a probability), so in either case we find
\ba{
\IP(B_{K,L}^c)=\bigo(1/i).
}

We have only left to bound the first term of~\eq{fbs1}, for which we must first 
define the coupling of $\law(W_{n,i})$ to $W_{n,i}(K,L)$.
For each $(k,l)$ in the support of $(K,L)$, we construct
\ban{ \label{coup}
\left\{\left(X_{s,j}\t{\delta/m}(k,l),\widetilde{X}_{s,j}\t{\delta/m}\right) : mn\geq s\geq j\in\{m(i-1)+1,\ldots,mi\right\},
}
to have the distribution of the indicators of the events
vertex $s$ connects to vertex $j$ in $G_{nm}^{1,\delta/m}(k,l)$ and
$G_{nm}^{1,\delta/m}$, respectively. With this fact established,
denote
\be{
W_{nm,j}\t{1,\delta/m}(k,l)=\sum_{s=j}^{nm} X_{s,j}\t{\delta/m}(k,l)
  \quad\text{and}\quad
        \widetilde{W}_{nm}\t{1,\delta/m}=\sum_{s=j}^{nm} \widetilde{X}\t{\delta/m}_{s,j},
}
which have the distribution of vertex $j$ in the indicated graphs,
and then we set
\be{
W_{n,i}(k,l)=\sum_{j=m(i-1)+1}^{mi} W_{nm,j}\t{1,\delta/m}(k,l)
\quad\text{and}\quad
        {W}_{n,i}=\sum_{j=m(i-1)+1}^{mi} \widetilde{W}_{nm,j}\t{1,\delta/m}.
} 
From this point we bound the first term of~\eq{fbs1} via
\ban{
\IP(W_{n,i}(k,l)\not=W_{n,i})&\leq\IP\left(\mathop{\bigcup}_{j=m(i-1)+1}^{mi} \left\{ W_{nm,j}\t{1,\delta/m}(k,l)\not=\widetilde{W}_{nm,j}\t{1,\delta/m}\right\}\right)\notag\\
&\leq \sum_{j=m(i-1)+1}^{mi} \IP(W_{nm,j}\t{1,\delta/m}(k,l)\not=\widetilde{W}_{nm,j}\t{1,\delta/m}), \label{coupbd}
}
and we show each term in the sum is $\bigo(1/i)$ (still depending on $m,\delta$, but not on $k,l$),
which establishes the theorem.

The constructions for different orders of $j,k,l$ are slightly different, so 
assume that $j < l< k$.  
Let $U_{s,j}(k,l)$ be independent uniform $(0,1)$ random variables
and for the sake of brevity, let $w=1+\delta/m$. First define 
\be{
        X_{j,j}\t{\delta/m}(k,l)=\I\left[U_{s,j}(k,l)<\frac{w}{j(2+\delta/m)-1)}\right]
}
and for $j<s<l$, given $W_{s-1,j}\t{1,\delta/m}(k,l)$,
\be{
  X_{s,j}\t{\delta/m}(k,l)=\I\left[U_{s,j}(k,l)<\frac{W_{s-1,j}\t{1,\delta/m}(k,l)+w}{s(2+\delta/m)-1)}\right].
}
Also let $\widetilde{X}_{s,j}\t{\delta/m}=X_{s,j}\t{\delta/m}(k,l)$ for $j\leq s <l$. That is, 
we can perfectly couple the degrees of vertex $j$ in the two graphs up until vertex $l$ arrives.
Now, for $l\leq s<k$, given $W_{s-1,j}\t{1,\delta/m}(k,l)$
and $\widetilde{W}_{s-1,j}\t{1,\delta/m}$ 
define
\ban{   
  X_{s,j}\t{\delta/m}(k,&l)=\I\bbbkle{U_{s,j}(k,l)<\frac{W_{s-1,j}\t{1,\delta/m}(k,l)+w}{s(2+\delta/m)}}, \label{30} \\
      &\widetilde{X}_{s,j}\t{\delta/m}=\I\bbbkle{U_{s,j}(k,l)<
            \frac{\widetilde{W}_{s-1,j}\t{1,\delta/m}+w}{s(2+\delta/m)-1}}. \label{31}
}
Set $X_{k,j}\t{\delta/m}(k,l)=0$ and $\widetilde{X}_{s,j}\t{\delta/m}$ as in~\eq{31} with $s=k$
and for $s>k$, define 
\ba{
  X_{s,j}\t{\delta/m}(k,&l)=\I\bbbkle{U_{s,j}(k,l)<\frac{W_{s-1,j}\t{1,\delta/m}(k,l)+w}{s(2+\delta/m)-1}}, \notag \\
      &\widetilde{X}_{s,j}\t{\delta/m}=\I\bbbkle{U_{s,j}(k,l)<
            \frac{\widetilde{W}_{s-1,j}\t{1,\delta/m}+w}{s(2+\delta/m)-1}}. 
}

For $j<l<k$, we have jointly and recursively defined the variables
$X_{s,j}\t{\delta/m}(k,l)$ and $\widetilde{X}_{s,j}\t{\delta/m}$, and it is clear they are
distributed as claimed above with $W_{nm,j}\t{1,\delta/m}(k,l)$ and
$\widetilde{W}_{nm,j}\t{1,\delta/m}$ the required degree counts.
Note also 
$\widetilde{X}_{s,j}\t{\delta/m}\geq X_{s,j}\t{\delta/m}(k,l)$
and $\widetilde{W}_{s,j}\t{1,\delta/m}\geq W_{s,j}\t{1,\delta/m}(k,l)$
and now define the event
\ba{
A_{s,j}(k,l)=\left\{\min\left\{j\leq t\leq nm : \widetilde{X}_{t,j}\t{\delta/m}\not= X_{t,j}\t{\delta/m}(k,l) \right\}=s\right\}.
}
Using that $\widetilde{W}_{s-1,j}\t{1,\delta/m}=W_{s-1,j}\t{1,\delta/m}(k,l)$ under
$A_{s,j}(k,l)$, we have
\ba{                              
        &\IP\left(W_{nm,j}\t{1,\delta/m}(k,l)\not=\widetilde{W}_{nm,j}\t{1,\delta/m}\right)
                = \IP\left(\bigcup_{s=j}^{nm} A_{s,j}(k,l)\right) \\
                &\leq  \sum_{s=l}^{k}
                        \IP\left( A_{s,j}(k,l) \bigcap\left\{\frac{W_{s-1,j}\t{1,\delta/m}(k,l)+w}{s(2+\tsfrac{\delta}{m})}< U_{s,j}(k,l) < \frac{\widetilde{W}_{s-1,j}\t{1,\delta/m}+w}{s(2+\tsfrac{\delta}{m})-1}\right\}\right) \\
                        &\leq \IE \widetilde{X}_{k,j}\t{\delta/m}+ \sum_{s=l}^{k-1}
                        \IP\left( \frac{W_{s-1,j}\t{1,\delta/m}+w}{s(2+\tsfrac{\delta}{m}}< U_{s,j}(k,l) < \frac{\widetilde{W}_{s-1,j}\t{1,\delta/m}+w}{s(2+\tsfrac{\delta}{m})-1}\right). 
}
Now using Theorem~\ref{thmmart}, the estimates in Lemma~\ref{lem3}, and the fact that
$j,l\in\{m(i-1)+1,\ldots,mi\}$, we find
\ba{
\IP&\left(W_{nm,j}\t{1,\delta/m}(k,l)\not=\widetilde{W}_{nm,j}\t{1,\delta/m}\right) \\
	&\leq \frac{\IE W_{k-1,j}\t{1,\delta/m}+w}{k(2+\delta)-1} +\sum_{s=l}^{k-1} \left(\IE W_{s-1,j}\t{1,\delta/m}+w\right)\left( \frac{1}{s(2+\tsfrac{\delta}{m})-1}-\frac{1}{s(2+\tsfrac{\delta}{m})} \right) \\
         &\leq C_{m,\delta}\Big[\left(\frac{k}{j}\right)^{1/(2+\delta/m)}\frac{1}{k}+\left(\frac{k}{j}\right)^{1/(2+\delta/m)}\frac{1}{jk} \\
                  &\qquad+\sum_{s= l}^{\infty} \left(\left(\frac{s}{j}\right)^{1/(2+\delta/m)}\frac{1}{s^{2}}+ \left(\frac{s}{j}\right)^{1/(2+\delta/m)}\frac{1} {j s^{2}}\right)\Big]\leq C_{m,\delta}/i.
}
For the case $l<j<k$, the coupling is similar to that above, except it starts
from~\eq{30} and~\eq{31} for $j\leq s <k$; the probability estimates are also similar.
If $j>k$, then it is easy to see that the variables can be perfectly coupled.
If $j=k$ or $j<l=k$, then the analog of the coupling above can only differ if the edge 
emanating from vertex $k$ connects to $j$ in $G_{k}^{1,\delta/m}$, which occurs with chance of order
\ba{
\left(\frac{k}{j}\right)^{1/(2+\delta/m)}\frac{1}{k}=\bigo(1/i).
}  
Thus, for any $k,l$ in the support of $(K,L)$ and $j\in\{m(i-1)+1,\ldots,mi\}$,
each of the $m$ terms in the sum~\eq{coupbd} is bounded above by $C_{m,\delta}/i$,
which establishes the result.
\end{proof}

\section*{Acknowledgments}
The author thanks Erol Pek\"oz and Adrian R\"ollin for
their enthusiastic support for this project.

\def\polhk#1{\setbox0=\hbox{#1}{\ooalign{\hidewidth
  \lower1.5ex\hbox{`}\hidewidth\crcr\unhbox0}}}

\end{document}